\numberwithin{equation}{section}
\theoremstyle{plain}
\newtheorem{theorem} {Theorem} [section]
\newtheorem*{theoremSN} {Theorem A}
\newtheorem*{theoremSNB} {Theorem B}
\newtheorem{lemma} [theorem] {Lemma}
\newtheorem{corollary} [theorem] {Corollary}
\newtheorem{proposition} [theorem] {Proposition}
\theoremstyle{definition}
\newtheorem{remark} [theorem] {Remark}
\newtheorem*{acknow} {Acknowledgments}
\renewcommand \parallel {/\kern-3pt/}
\newcommand \g {\mathfrak{g}}
\newcommand \h {\mathfrak{h}}
\renewcommand \k {\mathrm{k}}
\begin{document}

%%%%%%%%%%%%%%%%%%%%%%%%%%%%%%%%%%%%%%%%%%%%%%%%%%%%%%%%%%%%%%%%%%%%%%%%%%%%%%%%%%%%%%%%%%%%%%%%%%

\title{Faithful Representations of Minimal Dimension of $6$-dimensional nilpotent Lie algebras}

\title[Faithful Representations of the Nilpotent Lie Algebra]{Faithful Representations of Minimal Dimension of $6$-dimensional nilpotent Lie algebras}

%    Information for first author
\author{Nadina Elizabeth Rojas}

%    Address of record for the research reported here
\address{Current affiliation: CIEM, FaMAF, FCEFyN Universidad Nacional de C\'ordoba, \newline \indent Ciudad Universitaria, \newline \indent
(5000) C\'ordoba, \newline \indent Argentina}

\email{nrojas@efn.unc.edu.ar}

\thanks{Fully supported by CONICET-FaMAF-FCEFyN (Argentina)}

%    General info
\subjclass[2010]{17B10; 17B30; 17B45; 22E25; 22E47.}
%\date{\today}

\keywords{Nilpotent Lie algebras, Ado's Theorem, Minimal Faithful Representation, Nilrepresentation}

\begin{abstract}
The main goal of this paper is to compute
$\mu(\g)$ and
$\mu_{nil}(\g)$ for each nilpotent Lie algebra
$\g$ of dimension
$6$ over a field of characteristic zero
$\k$. Here
$\mu(\g)$ and
$\mu_{nil}(\g)$ is the minimal dimension of a faithful representation of
$\g$ and the minimal dimension of a faithful nilrepresentation of
$\g$, respectively.
We also give a minimal faithful representation and nilrepresentation for each nilpotent Lie algebra of dimension
$\leq 6$.
\end{abstract}

\maketitle

%====================================================================================

\section{Introduction}\label{intro}

%====================================================================================

In this paper all Lie algebras and representations are finite dimensional over a field
$\k$ of characteristic zero. Ado's theorem asserts that every Lie algebra has a finite dimensional faithful representation
(see for instance \cite[page 202]{J}).
Given a Lie algebra, let
$$
\mu(\g) = \min \{\dim V : (\pi, V) \text{ is a faithful representation of } \g\},
$$
similarly, let
$$
\mu_{nil}(\g) = \min \{\dim V : (\pi_{nil}, V) \text{ is a faithful nilrepresentation of } \g\}.
$$
It is clear that
$\mu(\g)$ and
$\mu_{nil}(\g)$ are an invariant integer of
$\g$ and
$\mu \leq \mu_{nil}$.
In the theory of Lie algebras, there is a interest in the value of
$\mu(\g)$ of a Lie algebra
$\g$. For instance,
$\mu(\g)$ plays an importance role in the theory of the affine crystallographic groups and
finitely-generated torsion free nilpotent groups.
It is known that for a Lie groups
$G$, if  admits a left-invariant affine structure if and only if
$\g= Lie(G)$ (the Lie algebra of
$G$) admits a left-symmetric algebra structure (see \cite{Se} or \cite{K}).

By the other hand, if
$\g$ admits a left-symmetric algebra structure, this one induces a faithful affine representation
$\alpha : \g \rightarrow \mathfrak{aff}(\g)$, called the \emph{affine holonomy representation} (see \cite{FGH}). If
$\dim \g = n$ then
$\mathfrak{aff}(\g) \subseteq \mathfrak{gl}(n+1)$ and we obtain a faithful representation of the Lie algebra
$\g$. It follows that
$$
\mu(\g) \leq n + 1.
$$

In this area, a remarkable example of a Lie group
$G$ not admitting any invariant affine structure was given by Yves Benoist in \cite{Be}.
He constructed a nilpotent Lie algebra
$\g$ such that
$\mu(\g) > \dim \g + 1$, this result is a counterexample
of a well now Milnor conjecture \cite{Mi} (see also \cite{BG}).

In general, there are no methods to determine these invariants for a given Lie algebra.
Moreover, it seems to be very hard to obtain
$\mu(\g)$ for certains Lie algebra
$\g$, for instance the free
$2$-step nilpotent Lie algebra. The
$\mu$ has been achieved for very few
families (see for instance \cite{Be}, \cite{BM}, \cite{CR}, \cite{KB}, \cite{J2}, \cite{R}, \cite{S}).

Let
$\g$ be a nilpotent Lie algebra, it is known that
$\mu(\g) < \frac{3}{\sqrt{\dim \g}} 2^{\dim \g}$
(see \cite{B}). In particular, if
$\g$ is a nilpotent Lie algebra of dimension
$\leq 7$, we get
$\g$ admit a left-symmetric algebra structure then
\begin{equation}\label{eq:5}
\mu(\g) \leq \dim \g + 1.
\end{equation}
This result was proved by Y. Benoist (see \cite[Lemma 6]{Be}).

By the other hand, the nilpotent Lie algebras of dimension
$6$ have been classified by different authors (see for instance
\cite{Mo}, \cite{Ni}). In particular, De Graaf \cite{Gr} give
a classification over a fields of characteristic different from
$2$. If
$\g$ is a nilpotent Lie algebra of dimension
$6$, by equation (\ref{eq:5}) we have
$\mu(\g) \leq 7$.

This problem is also related
to the representation theory of finitely generated nilpotent groups, since it is
important to have methods for finding representations by integer matrices
of small dimension for this class of groups (see for instance \cite{GN}, \cite{N}).

Our main result in this paper are the value of
$\mu$ and
$\mu_{nil}$ for all nilpotent Lie algebra of dimension
$6$ over a field
$\k$ of characteristic zero.
Furthermore, we obtain faithful representations and faithful nilrepresentations of
minimal dimension for all nilpotent Lie algebras of dimension
$\leq 6$.

Throughout this paper we use De Graaf's classification given in \cite{Gr} and
in this direction, we prove the following theorem.

\begin{theoremSN}
According to the notation used in \cite{Gr}, the values of
$\mu$ and
$\mu_{nil}$ for the Lie algebras of dimension 6 over a field
$\k$ of characteristic zero are
\begin{table}[!h]
\begin{centering}
$$
\begin{tabular}{||l||c|c||}
\hline
\multicolumn{1}{||c||}{\bf{ De Graaf's classification}} & $\pmb{\mu}$ & $\pmb{\mu}_{nil}$\\
\hline
\hline
\vbox to .4cm{}
$L_{6,19}(\epsilon)$; if there exists $\alpha \in \k^*$ such that $\alpha^2 + \epsilon = 0$.        & 4 & 4 \\
\hline
\vbox to .4cm{}
$L_{6,3}$,  $L_{6,4}$,  $L_{6,5}$,  $L_{6,8}$                         & 4 & 5 \\
\hline
\vbox to .4cm{}
$L_{6,1}$,  $L_{6,2}$,  $L_{6,6}$,  $L_{6,7}$, $L_{6,10}$, $L_{6,11}$,$L_{6,12}$, $L_{6,13}$ &\multirow{2}{*}{5} & \multirow{2}{*}{5}\\
\vbox to .4cm{}
$L_{6,20}$, $L_{6,21}(\epsilon)$, $L_{6,22}(\epsilon)$   $L_{6,23}$, $L_{6,25}$, $L_{6,26}$     &                   &                   \\
\hline
\vbox to .4cm{}
$L_{6,24}(\epsilon)$; if there exist $\alpha \in \k$ such that  $\alpha^2 - \epsilon = 0$.           & 5 & 5  \\
\hline
\vbox to .4cm{}
$L_{6,19}(\epsilon)$; if $\alpha^2 + \epsilon \neq 0$ for all $\alpha \in \k^*$. & 5 & 5\\
\hline
\vbox to .4cm{}
$L_{6,9}$                                                       &5& 6 \\
\hline
\vbox to .4cm{}
$L_{6,14}$, $L_{6,15}$, $L_{6,16}$, $L_{6,17}$, $L_{6,18}$              &6& 6 \\
\hline
\vbox to .4cm{}
$L_{6,24}(\epsilon)$; if $\alpha^2 - \epsilon \neq 0$   for all $\alpha \in \k$.    & 6 & 6  \\
\hline
\end{tabular}
$$
\caption{$\mu$ and $\mu_{nil}$ for all nilpotent Lie algebras of $dim= 6$}
\label{Tabla:1}
\end{centering}
\end{table}
\end{theoremSN}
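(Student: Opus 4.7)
The plan is a case-by-case verification that sweeps through the 26 isomorphism classes in De Graaf's list, splitting each row of the table into an upper-bound construction and a lower-bound obstruction.

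For the upper bounds, for each algebra $\g$ I would exhibit an explicit faithful representation $\pi : \g \to \gl(V)$ with $\dim V$ equal to the claimed value of $\mu(\g)$, and an explicit faithful nilrepresentation of dimension $\mu_{nil}(\g)$. The nilrepresentation is built out of strictly upper triangular matrices; where the desired dimension is noticeably smaller than $\dim \g$, this typically requires combining two smaller blocks, one coming from a quotient of $\g$ by a piece of its centre and one carrying the remaining central generator. In the rows with $\mu < \mu_{nil}$ (the $L_{6,3}, L_{6,4}, L_{6,5}, L_{6,8}$ row and $L_{6,9}$), the smaller faithful representation must use semisimple parts; these I would write down in block form, with a strictly upper triangular block carrying the nilpotent part and a diagonal block with distinct eigenvalues carrying a generator of the abelian quotient. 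For the parameterised families $L_{6,19}(\epsilon)$, $L_{6,21}(\epsilon)$, $L_{6,22}(\epsilon)$, $L_{6,24}(\epsilon)$, the field-dependent dichotomy in the statement corresponds exactly to when the relevant eigenvalues can be realised rationally over $\k$.

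The lower bounds are the heart of the argument, and I would rely on a small toolkit of invariants. For $\mu_{nil}$, Engel's theorem forces $\pi(\g)$ into strictly upper triangular matrices in some basis, so $\dim V$ is bounded below by $c+1$ where $c$ is the nilpotency class; finer information from the dimensions of $\z(\g)$ and $[\g,\g]$ and from the flag of $\pi(\g)$-invariant subspaces sharpens this in each case. For $\mu$, the restriction $\pi|_{\z(\g)}$ is still faithful, so $V$ must carry enough generalised eigenspaces to accommodate $\pi(\z(\g))$; combined with the fact that $\pi([\g,\g])$ acts nilpotently (automatic for nilpotent $\g$), this gives the required lower bound. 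Many of the six-dimensional algebras contain a Heisenberg subalgebra, an $\n_4$, or a filiform subalgebra whose minimal faithful dimension is already known, so restriction to those subalgebras provides additional lower bounds. For each algebra I would pick the sharpest combination of these inputs and check it against the claimed value.

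The main obstacle I expect is separating $\mu$ from $\mu_{nil}$: proving $\mu(\g) \geq 5$ for an algebra where one would naively try to embed into $\gl_4$ requires ruling out all possible Jordan decompositions of the image, not merely the nilpotent ones. This is especially delicate for $L_{6,9}$ and for the conditional cases $L_{6,19}(\epsilon)$ and $L_{6,24}(\epsilon)$, where the answer genuinely depends on whether $\k$ contains a square root of $\pm\epsilon$. For these I would analyse, for each candidate value of $\dim V$, the possible restricted images of $\z(\g)$ and of $[\g,\g]$, and then use the explicit bracket relations from De Graaf's presentation to force either a field element with the required property or a contradiction.
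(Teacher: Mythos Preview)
Your plan is essentially the paper's strategy: explicit matrices for the upper bounds (Tables~7 and~8 in the paper) and case-by-case obstructions for the lower bounds. Two shortcuts the paper exploits would save you a great deal of work, and you do not mention them.

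First, for the uniform bound $\mu_{nil}(L_{6,j})\geq 5$ when $j\neq 19$: rather than squeezing this out of nilpotency class, centre dimension, or subalgebra restrictions as you propose, the paper observes that $\dim\mathfrak{n}_4(\k)=6$, so a six-dimensional nilpotent Lie algebra with a faithful nilrepresentation of dimension $4$ must be \emph{equal} to $\mathfrak{n}_4(\k)$; one then checks that $\mathfrak{n}_4(\k)\cong L_{6,19}(-1)$, which via the isomorphism criterion for the family $L_{6,19}(\epsilon)$ gives exactly the square-root condition. This single paragraph disposes of the $\mu_{nil}\geq 5$ bound for all $j\neq 19$ at once.

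Second, for separating $\mu$ from $\mu_{nil}$: the paper uses Proposition~\ref{coro:nilpot} to show that whenever $\z(\g)\subseteq[\g,\g]$ one has $\mu(\g)=\mu_{nil}(\g)$, because the nilpotent part of any faithful representation is already faithful. This covers almost every algebra in the table automatically; only $L_{6,j}$ for $j=1,\dots,9$ (the direct sums $L_{5,j}\oplus\k$) need a separate argument for $\mu$, and there the paper uses $\mu(\g\oplus\k)\leq\mu_{nil}(\g)$ together with the dimension-$5$ results. Your proposal to analyse Jordan decompositions directly would work but is unnecessary once this reduction is in place. (Incidentally, the non-nil representations that witness $\mu<\mu_{nil}$ in the paper add a \emph{single} scalar $a_1 I$ to the nilpotent block, not a diagonal with distinct eigenvalues.)

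The genuinely hard cases---$L_{6,9}$ and $L_{6,24}(\epsilon)$---do require the kind of detailed matrix analysis you sketch at the end, and the paper carries this out by first classifying all embeddings of $L_{5,9}$ into $\mathfrak{n}_5(\k)$ (Lemmas~\ref{lema:1}--\ref{lema:5}) and then showing no compatible sixth generator $X_4$ exists unless the square-root condition holds.
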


The paper is organized as follows. In \S \ref{Sec:22}, a minimal faithful representation and a
faithful nilrepresentation are given for all nilpotent Lie algebra of dimension
$\leq 5$. In this direction, we prove the following theorem:

\begin{theoremSNB}\label{dim5}
According to the notation used in \cite{Gr}, the values of
$\mu$ and
$\mu_{nil}$ for the Lie algebras of dimension 5 are
\begin{table}[!h]
\begin{centering}
$$
\begin{tabular}{||l||c|c||}
\hline
\multicolumn{1}{||c||}{{\bf Lie algebra}} & $\pmb{\mu}$ & $\pmb{\mu}_{nil}$\\
\hline
\hline
\vbox to .4cm{}
$L_{5,3}$, $L_{5,4}$, $L_{5,5}$, $L_{5,8}$     & 4         & 4 \\
\hline
\vbox to .4cm{}
$L_{5,1}$                         & 4         & 5 \\
\hline
\vbox to .4cm{}
$L_{5,2}$, $L_{5,7}$,  $L_{5,6}$, $L_{5,9}$  & 5         & 5 \\
\hline
\end{tabular}
$$
\caption{$\mu$ and $\mu_{nil}$ for all nilpotent\\ Lie algebras of $\dim= 5$}
\label{Tabla:2}
\end{centering}
\end{table}
\end{theoremSNB}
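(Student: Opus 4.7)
The plan is to verify the table entry-by-entry for the nine isomorphism classes $L_{5,1},\dots,L_{5,9}$ of De Graaf's classification, establishing in each case matching upper and lower bounds for both $\mu$ and $\mu_{nil}$.

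For the upper bounds I would exhibit explicit faithful representations and nilrepresentations as $n\times n$ matrices satisfying the structure equations of $\g$. The main constructions are: a suitable quotient of the adjoint representation, which typically yields a faithful representation in dimension $\dim\g-\dim\z(\g)+1$; the affine holonomy of a left-symmetric structure on $\g$, which by \eqref{eq:5} produces a faithful representation in dimension $\dim\g+1=6$ that in most cases admits an ad hoc reduction to dimension $5$ or $4$; and direct embeddings into the strictly upper triangular matrices $\n_n$ to control the nilrepresentation bounds. For $L_{5,1}$, since the $4$-dimensional faithful representation cannot be nilpotent, I would construct it by using a nontrivial semisimple derivation of $\g$ so that the image genuinely exits $\n_4$.

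For the lower bounds I would rely on invariants of $\g$. If $\pi:\g\to\gl(V)$ is faithful, then $\pi|_{\z(\g)}$ is injective and $\pi(\z(\g))$ is a commuting family in $\gl(V)$, which forces $\dim V$ to be bounded below by the classical Schur estimate $\lfloor n^2/4\rfloor+1$ on abelian subalgebras of $\gl_n$. If $\pi$ is moreover a nilrepresentation then $\pi(\g)\subseteq\n_n$ for a suitable flag; this gives $\mu_{nil}(\g)\geq k+1$ where $k$ is the nilpotency class of $\g$, and the positions of $\z(\g)$ and $[\g,\g]$ inside the flag impose further combinatorial constraints. Since in every case $\dim V\leq 5$, these constraints can be checked by a short direct case analysis.

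The step I expect to be hardest is the asymmetric case $L_{5,1}$, where I must simultaneously exhibit a non-nilpotent faithful representation of dimension $4$ and prove that no faithful nilrepresentation of dimension $4$ exists. A pure dimension count cannot rule out the latter, since $\dim\n_4=6>5=\dim L_{5,1}$; the argument must track exactly where $\z(\g)$ and $[\g,\g]$ land in the flag and derive a contradiction from the specific bracket relations of $L_{5,1}$. A second delicate point is the matching lower bound $\mu\geq 4$ for the abelian-like algebras in the list, which uses the Schur estimate above in its sharp form and must be paired with a precisely-tuned $4$-dimensional construction.
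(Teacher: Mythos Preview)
Your overall strategy---explicit matrices for upper bounds, Schur-type and nilpotency-class arguments for lower bounds---is correct and matches the paper's approach. But you have misidentified where the difficulty lies.

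The algebra $L_{5,1}$ is the \emph{abelian} $5$-dimensional Lie algebra (Table~\ref{Tabla:3}), so there are no ``specific bracket relations'' to track, and no need for a semisimple derivation: both invariants follow immediately from the Schur formulas in Proposition~\ref{munilabeliana}(\ref{abelian}), namely $\mu=\lceil 2\sqrt{4}\rceil=4$ and $\mu_{nil}=\lceil 2\sqrt{5}\rceil=5$. The explicit $4$-dimensional faithful representation is simply a scalar plus a block of commuting nilpotents.

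The case that actually requires an ad hoc argument is $L_{5,9}$. It has nilpotency class $3$, so your bound $\mu_{nil}\geq k+1$ yields only $\mu_{nil}\geq 4$, not $5$. The paper's Lemma~\ref{lema:59} closes the gap with a one-line observation you did not isolate: $\dim[\g,[\g,\g]]=2$ for $\g\cong L_{5,9}$, whereas $\dim[\n_4(\k),[\n_4(\k),\n_4(\k)]]=1$, so no subalgebra of $\n_4(\k)$ can be isomorphic to $L_{5,9}$. Your ``combinatorial constraints on the flag'' would eventually reproduce this, but it is worth stating as the key invariant. You should also note the systematic reduction the paper uses: whenever $\z(\g)\subseteq[\g,\g]$ (which holds for $L_{5,4},L_{5,5},L_{5,6},L_{5,7},L_{5,8},L_{5,9}$), Corollary~\ref{coro:mumunil} gives $\mu=\mu_{nil}$, so one only has to bound $\mu_{nil}$ from below---and for the class-$2$ algebras in that list the crude bound $\dim\n_3(\k)=3<5$ already forces $\mu_{nil}\geq 4$.
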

The Theorem B is an extension of results that have been obtained independently in \cite{BNT}.

In \S \ref{sec:dim6}, the upper bound for
$\mu(L_{6,j})$ is obtained by an explicit faithful representation and faithful nilrepresentation of minimal dimension for all
$j = 1, \dots, 24$ and we give justification for the lower
bound for the Lie algebras
$L_{6,j}$ with
$j \neq 9, 19, 24$.

In \S \ref{sec:L619} we proof
$\mu(L_{6,19}(\epsilon))$ for every
$\epsilon \in \k$ and in \S \ref{sec:L6924} we prove the lower bound for
$\mu_{nil}(L_{6,9})$ and
$\mu_{nil}(L_{6,24}(\epsilon))$ for every
$\epsilon \in \k$.

%====================================================================================

\section{Preliminaries}

%====================================================================================

Let us mention some results on finite dimensional representations of nilpotent Lie algebras
that will be needed in the following section.

It is a well known theorem of Zassenhaus.

\begin{theorem}\cite[page 41]{J}
Let
$\g$ be a finite dimensional nilpotent Lie algebra and let
$(\pi, V)$ be a finite dimensional representation of
$\g$. If
$\k$ is algebraically closed then
$$
V= V_1 \oplus \dots \oplus V_s
$$
such that
$\pi(X)\mid_{V_i}$ is a scalar
$\lambda_i(X)$ plus a nilpotent operator
$\pi_{N_i}(X)$ on
$V_i$ for all
$X \in \g$ and
$i=1, \dots, s$. Moreover,
$(\pi_{N_i}, V_i)$ is a nilrepresentation of
$\g$ for all
$i=1, \dots, s$.
\end{theorem}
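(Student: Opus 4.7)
The plan is to establish the generalized weight space decomposition of $V$ associated to the nilpotent action of $\g$, and then verify that on each weight space the operators decompose as claimed. The structure is: (i) prove a commutator identity, (ii) deduce that generalized eigenspaces of any single $\pi(X)$ are $\pi(\g)$-invariant, (iii) induct on $\dim V$, and (iv) check that each weight is a Lie algebra homomorphism so that subtracting it yields a nilrepresentation.

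The crucial preliminary lemma is: for fixed $X \in \g$ and $\lambda \in \k$, the generalized eigenspace $V_\lambda^X = \ker(\pi(X) - \lambda\,\id)^{\dim V}$ is $\pi(\g)$-invariant. I would prove this by first establishing, by induction on $N$, the operator identity
\[
(\pi(X) - \lambda\,\id)^N \pi(Y) \;=\; \sum_{k=0}^{N} \binom{N}{k}\, \pi\bigl((\ad X)^k Y\bigr)\,(\pi(X) - \lambda\,\id)^{N-k}
\]
for all $Y \in \g$. Since $\g$ is nilpotent, $\ad X$ is a nilpotent operator on $\g$, so there is $m$ with $(\ad X)^k Y = 0$ for $k \geq m$. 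Taking $N$ large enough that $(\pi(X) - \lambda\,\id)^{N-m}$ annihilates a chosen $v \in V_\lambda^X$ shows $\pi(Y)v \in V_\lambda^X$. This is the main technical step and uses the nilpotency hypothesis in an essential way.

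With the lemma in hand, I would induct on $\dim V$. If there exists $X \in \g$ such that $\pi(X)$ has at least two distinct generalized eigenvalues, the primary decomposition of $\pi(X)$ produces a nontrivial direct sum $V = W_1 \oplus \cdots \oplus W_r$ of $\pi(\g)$-invariant subspaces (by the lemma), each of strictly smaller dimension, and the inductive hypothesis applies to each summand. Otherwise every $\pi(X)$ has a single generalized eigenvalue, call it $\lambda(X)$, so that $\pi(X) - \lambda(X)\,\id$ is nilpotent on all of $V$; in this case set $s=1$ and $V_1 = V$.

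It remains to verify, on each summand $V_i$, that $\lambda_i : \g \to \k$ is a Lie algebra homomorphism so that $\pi_{N_i}(X) := \pi(X)|_{V_i} - \lambda_i(X)\,\id$ is genuinely a representation. Linearity is immediate from $\lambda_i(X) = \tfrac{1}{\dim V_i}\operatorname{tr}(\pi(X)|_{V_i})$. Vanishing on brackets follows since $\pi([X,Y])|_{V_i} = [\pi(X)|_{V_i}, \pi(Y)|_{V_i}]$ has trace zero, while it also equals $\lambda_i([X,Y])\,\id$ plus a nilpotent operator, forcing $\lambda_i([X,Y]) = 0$. Hence each $\pi_{N_i}(X)$ is nilpotent and $\pi_{N_i}$ is a Lie algebra homomorphism into nilpotent operators, i.e.\ a nilrepresentation, completing the proof.
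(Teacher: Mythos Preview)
Your argument is correct and is essentially the classical proof of Zassenhaus's theorem as presented in Jacobson's book. Note, however, that the paper does not supply its own proof of this statement: it is quoted as a known preliminary result with the citation \cite[page 41]{J}, and no proof appears in the paper itself. There is therefore nothing to compare against beyond observing that your route---the binomial commutator identity exploiting nilpotency of $\ad X$, invariance of generalized eigenspaces, induction on $\dim V$, and the trace argument to show each $\lambda_i$ is linear and kills $[\g,\g]$---is the standard one that Jacobson gives.
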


An immediate consequence of this result is that every representation
of a nilpotent Lie algebra has a Jordan decomposition (see for instance \cite[Theorem 2.1 and Theorem 2.2]{CR}).

\begin{proposition}\label{coro:nilpot}
Let
$\g$ be a finite dimensional nilpotent Lie algebra and let
$(\pi, V)$ be a finite dimensional representation of
$\g$. For each
$X \in \g$, let
$\pi_S(X)$ and
$\pi_N(X)$ be, respectively, the semisimple and nilpotent parts of the additive Jordan decomposition of
$\pi(X)$. Then
$(\pi_S, V)$ and
$(\pi_N, V)$ are representation of
$\g$. Moreover, if the center
$\mathfrak{z}(\g)$ is contained in
$[\g, \g]$ then
$(\pi, V)$ is faithful if and only if
$(\pi_N, V)$ is faithful.
\end{proposition}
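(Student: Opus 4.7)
The plan is to reduce to the algebraically closed case via Zassenhaus's theorem, verify that $\pi_S$ and $\pi_N$ are Lie algebra homomorphisms using the weight-space decomposition, and then exploit the fact that $\pi_S$ is central to handle the faithfulness equivalence.

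First, I would base-change to $\bar{\k}$ so Zassenhaus applies and produces a decomposition $V\otimes_{\k}\bar{\k}=V_1\oplus\cdots\oplus V_s$ with weights $\lambda_i\colon\g\to\bar{\k}$ so that $\pi(X)|_{V_i}=\lambda_i(X)\,\id_{V_i}+\pi_{N_i}(X)$. On each $V_i$, the scalar part and the nilpotent part commute and the scalar part is semisimple, so $\pi_S(X)|_{V_i}=\lambda_i(X)\,\id_{V_i}$ and $\pi_N(X)|_{V_i}=\pi_{N_i}(X)$ by uniqueness of the additive Jordan decomposition. Because in characteristic zero the semisimple and nilpotent parts of a $\k$-linear endomorphism are themselves polynomials in it and thus $\k$-linear, the decomposition $\pi(X)=\pi_S(X)+\pi_N(X)$ descends to $V$, and whatever identities we check after base change will hold over $\k$.

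Next I would show that $\pi_S$ is a Lie algebra homomorphism. Each $V_i$ is $\pi(\g)$-stable, so taking traces gives $\dim(V_i)\,\lambda_i([X,Y])=\mathrm{tr}\bigl([\pi(X),\pi(Y)]|_{V_i}\bigr)=0$, hence $\lambda_i([X,Y])=0$ and $\pi_S([X,Y])=0$. On the other hand $\pi_S(X)$ is a scalar on each $V_i$, so it commutes with every endomorphism preserving the decomposition; in particular $[\pi_S(X),\pi_S(Y)]=0$, so $\pi_S$ is a representation. The same commutation gives $[\pi_S(X),\pi(Y)]=0=[\pi(X),\pi_S(Y)]$, and so
\[
[\pi_N(X),\pi_N(Y)]=[\pi(X)-\pi_S(X),\pi(Y)-\pi_S(Y)]=[\pi(X),\pi(Y)]=\pi([X,Y])=\pi_N([X,Y]),
\]
using $\pi_S([X,Y])=0$ in the last equality. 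Thus $\pi_N$ is a representation.

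For the faithfulness statement, the implication $(\pi_N\text{ faithful})\Rightarrow(\pi\text{ faithful})$ is immediate since $\pi(X)=0$ forces both $\pi_S(X)=0$ and $\pi_N(X)=0$, giving $\ker\pi\subseteq\ker\pi_N$. Conversely, suppose $\pi$ is faithful and $\mathfrak{z}(\g)\subseteq[\g,\g]$. Let $\mathfrak{a}=\ker\pi_N$; for $X\in\mathfrak{a}$ we have $\pi(X)=\pi_S(X)$, which commutes with $\pi(Y)$ for every $Y\in\g$ by the previous paragraph, so $\pi([X,Y])=0$ and by faithfulness $X\in\mathfrak{z}(\g)\subseteq[\g,\g]$. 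But $\pi_S$ takes values in a commutative algebra, so $\pi_S|_{[\g,\g]}=0$, forcing $\pi(X)=\pi_S(X)=0$ and therefore $X=0$. Hence $\pi_N$ is faithful.

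The only genuinely delicate point is the descent of the Jordan decomposition from $\bar{\k}$ to $\k$ and the identification of $\pi_S|_{V_i}$ with $\lambda_i(X)\,\id_{V_i}$; everything else is bookkeeping with the weight decomposition. Once those identifications are in hand, the centrality of $\pi_S$ is what makes both the ``representation'' claim for $\pi_N$ and the faithfulness equivalence fall out cleanly.
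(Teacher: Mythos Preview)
Your argument is correct and follows exactly the route the paper indicates: the paper does not spell out a proof but merely states that the proposition is an immediate consequence of Zassenhaus's theorem (citing \cite[Theorems 2.1 and 2.2]{CR}), and your proof is precisely the natural unpacking of that remark via the weight-space decomposition. The key steps you identify --- that each $\lambda_i$ vanishes on $[\g,\g]$, that $\pi_S(X)$ is central in the image, and that $\ker\pi_N\subseteq\mathfrak z(\g)$ when $\pi$ is faithful --- are exactly what is needed, and the descent of the Jordan decomposition to $\k$ is handled correctly.
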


\begin{corollary}\label{coro:mumunil}
Let
$\g$ be a finite dimensional nilpotent Lie algebra such that the center
$\mathfrak{z}(\g)$ is contained in
$[\g, \g]$ then
$\mu(\g) = \mu_{nil}(\g)$.
\end{corollary}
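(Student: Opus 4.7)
The plan is to deduce the corollary as a two-line consequence of Proposition~\ref{coro:nilpot}; there is no substantial obstacle, only bookkeeping.

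First, I would record the trivial inequality $\mu(\g) \leq \mu_{nil}(\g)$: every faithful nilrepresentation is in particular a faithful representation, so the infimum defining $\mu(\g)$ is taken over a set containing the one defining $\mu_{nil}(\g)$. This direction uses no hypothesis on $\g$.

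For the reverse inequality, I would choose a faithful representation $(\pi,V)$ of $\g$ realizing the minimum, so $\dim V = \mu(\g)$. Applying the Jordan decomposition furnished by Proposition~\ref{coro:nilpot}, the map $\pi_N \colon \g \to \gl(V)$ given by the nilpotent parts of the operators $\pi(X)$ is a representation of $\g$. By construction each $\pi_N(X)$ is a nilpotent endomorphism of $V$, so $(\pi_N,V)$ is a nilrepresentation. The hypothesis $\mathfrak{z}(\g) \subseteq [\g,\g]$ is exactly what Proposition~\ref{coro:nilpot} requires in order to conclude that faithfulness of $\pi$ transfers to $\pi_N$. Hence $(\pi_N,V)$ is a faithful nilrepresentation, yielding $\mu_{nil}(\g) \leq \dim V = \mu(\g)$.

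Combining the two inequalities gives the equality $\mu(\g)=\mu_{nil}(\g)$. The entire argument is just a straightforward invocation of the preceding proposition, and no case analysis or computation is needed; the content lies entirely in Proposition~\ref{coro:nilpot}.
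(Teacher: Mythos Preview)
Your proposal is correct and is exactly the intended argument: the paper states the corollary without proof immediately after Proposition~\ref{coro:nilpot}, treating it as an immediate consequence, and your two inequalities (the trivial $\mu\le\mu_{nil}$ and the reverse via the nilpotent part $\pi_N$ of a minimal faithful $\pi$) are precisely how one reads it off.
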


If
$\g$ is a nilpotent Lie algebra and
$(\pi_{nil}, V)$ is a nilrepresentation of
$\g$, we define the linear mapping
$\pi : \g \oplus \k \rightarrow \mathfrak{gl}(V)$ given by
$\pi(X, a)= \pi_{nil}(X) + aI$ where
$I$ is the identity map on
$V$. Since
$(\pi_{nil}, V)$ is a representation of
$\g$, by a straightforward calculation we have
$(\pi, V)$ is a representation of
$\g \oplus \k$ and if
$(\pi_{nil}, V)$ is a faithful then
$(\pi, V)$ is faithful too. Thus
$\mu(\g \oplus \k) \leq \mu_{nil}(\g)$.

\begin{proposition}\label{munilabeliana}
Let
$\g$ be a Lie algebra of finite dimension over a field
$\k$ of characteristic zero.
\begin{enumerate}[(a)]
\item \label{abelian}
      If $\g$ is an abelian Lie algebra then
      $\mu(\g) = \left\lceil 2\sqrt{\dim \g - 1} \right\rceil$ and
      $\mu_{nil}(\g) = \left\lceil 2\sqrt{\dim \g} \right\rceil$ (see for instance \cite{S}, \cite{J2}, \cite{MM}).
\item \label{filiform}
      If $\g$ is a filiform Lie algebra then
      $\mu(\g) \geq \dim \g$ and if
      $\dim \g < 10$ then
      $\mu(\g) = \dim \g$ (see \cite{B}). Also
      $\mu_{nil}(\g) = \mu(\g)$ (see Proposition \ref{coro:nilpot}).
\item \label{directsum}
      If $\g$ is a nilpotent Lie algebra then
      $\mu(\g \oplus \k) \leq \mu_{nil}(\g)$ and if
      $\mathfrak{z}(\g) \subseteq [\g , \g]$ then
      $\mu(\g \oplus \k) = \mu(\g)$ (see Corollary \ref{coro:mumunil}).
\end{enumerate}
\end{proposition}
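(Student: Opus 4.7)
The three assertions are essentially independent, and parts (a) and (b) are attributed to the cited literature, so the plan is to recall only enough of each argument to justify the stated formulas and then to prove (c) in detail.

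For part (a), the key observation is that faithful representations of an abelian Lie algebra $\g$ of dimension $n$ on $\k^m$ are in bijection with $n$-dimensional commutative subspaces of $\mathfrak{gl}(m,\k)$, and nilrepresentations correspond to those subspaces lying in the nilpotent matrices. Schur's classical theorem bounds the dimension of any commutative subalgebra of $\mathfrak{gl}(m,\k)$ by $\lfloor m^2/4\rfloor+1$, with the analogous bound $\lfloor m^2/4\rfloor$ for commuting nilpotents. Inverting these inequalities yields the stated ceilings $\lceil 2\sqrt{n-1}\rceil$ and $\lceil 2\sqrt{n}\rceil$, and sharpness is realised by the standard block construction consisting of matrices of the form $\bigl(\begin{smallmatrix}0 & A \\ 0 & 0\end{smallmatrix}\bigr)$, with $A$ ranging over a suitable subspace of rectangular matrices.

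For part (b), let $\g$ be a filiform Lie algebra of dimension $n$. It has nilpotency class $n-1$ and a one-dimensional centre, so it contains an element $X$ whose adjoint action is a single Jordan block of length $n-1$. In any faithful representation $(\pi,V)$, the operator $\ad\pi(X)$ on $\pi(\g)$ must still have nilpotency index $n-1$, which already forces a lower bound on $\dim V$; a filiform-specific refinement that also exploits the faithful action on the one-dimensional centre (carried out in \cite{B}) upgrades this bound to $\dim V\geq n$. For $n<10$ the same reference exhibits an explicit $n$-dimensional faithful representation, proving equality. Finally, $\mathfrak{z}(\g)\subseteq[\g,\g]$ for every non-abelian filiform Lie algebra, and hence Corollary \ref{coro:mumunil} gives $\mu_{nil}(\g)=\mu(\g)$.

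For part (c), the inequality $\mu(\g\oplus\k)\leq\mu_{nil}(\g)$ is already constructed in the paragraph preceding the proposition: given a faithful nilrepresentation $(\pi_{nil},V)$ of $\g$, the map $\pi(X,a):=\pi_{nil}(X)+aI_V$ is a representation of $\g\oplus\k$ on $V$, and it is faithful because $\pi_{nil}(X)+aI_V=0$ forces $a=0$ (the nilpotent $\pi_{nil}(X)$ cannot equal a non-zero multiple of the identity) and then $X=0$ by faithfulness of $\pi_{nil}$. Under the hypothesis $\mathfrak{z}(\g)\subseteq[\g,\g]$, Corollary \ref{coro:mumunil} converts this to $\mu(\g\oplus\k)\leq\mu_{nil}(\g)=\mu(\g)$; the reverse inequality is immediate, since restricting any faithful representation of $\g\oplus\k$ to the ideal $\g$ remains faithful on $\g$. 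The main obstacle in the proposition is the sharp lower bound in (b), which genuinely requires the filiform-specific argument of \cite{B}; the remaining parts reduce either to classical Schur-type bounds or to short observations already established in the preceding pages.
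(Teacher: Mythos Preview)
Your proposal is correct and aligns with the paper's treatment: the proposition is stated there without a separate proof, each part being attributed to the indicated references, with the only in-text argument being the construction $\pi(X,a)=\pi_{nil}(X)+aI$ for the inequality in (c), exactly as you reproduce. Your sketches for (a) and (b) accurately reflect what the cited sources \cite{S}, \cite{J2}, \cite{MM}, \cite{B} establish, and your argument for (c), including the restriction step for the reverse inequality, is the intended one.
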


%=====================================================================================

\begin{section}{Minimal faithful representation for nilpotent Lie algebras of dimension at most $5$}\label{Sec:22}

%=====================================================================================

The brackets, for each Lie algebra
$\g$ considered bellow, are described in a basis
$B = \{X_1, \dots, X_p, Z_1, \dots, Z_q\}$ such that
$\{Z_1, \dots, Z_q\}$ is a basis of the center
$\mathfrak{z}(\g)$ of the Lie algebra
$\g$.

The Table \ref{Tabla:3} contains the classification of all nilpotent Lie algebras of dimension at most
$5$.

\begin{table}[!h]
\begin{centering}
$$
\begin{tabular}{||c|c|c||}
\hline
\multicolumn{1}{||c||}{{\bf Dim.}} & {\bf Lie algebra} & {\bf Law of the Lie algebra}\\
\hline
\hline
\vbox to .4cm{}
1                  & $L_{1,1}$ & abelian \\
\hline
\vbox to .4cm{}
2                  & $L_{2,1}$  & abelian \\
\hline
\vbox to .4cm{}
\multirow{2}{*}{3} & $L_{3,1}$ & abelian  \\
\cdashline{2-3}
                   & $L_{3,2}$ \vbox to .4cm{}& $[X_1 , X_2]= Z_1$ \\
\hline
\multirow{3}{*}{4} & $L_{4,1}$ \vbox to .4cm{}& abelian \\
\cdashline{2-3}
                   & $L_{4,2}$ \vbox to .4cm{} &  $L_{3,2} \oplus L_{1,1}$ \\
\cdashline{2-3}
                   & $L_{4,3}$  \vbox to .4cm{}& $[X_1, X_2]= X_3, [X_1, X_3]= Z_1$ \\
\hline
\vbox to .4cm{}
\multirow{10}{*}{5} & $L_{5,1}$ \vbox to .4cm{}& abelian \\
\cdashline{2-3}
                   & $L_{5,2}$ \vbox to .4cm{}& $L_{3,2} \oplus L_{2,1}$ \\
\cdashline{2-3}
                   & $L_{5,3}$ \vbox to .4cm{}& $L_{4,3} \oplus L_{1,1}$ \\
\cdashline{2-3}
                   & $L_{5,4}$ \vbox to .4cm{} & $[X_1, X_2]= Z_1, [X_3, X_4]= Z_1$ \\
\cdashline{2-3}
                  & $L_{5,5}$ \vbox to .4cm{} & $[X_1, X_2] = X_3, [X_1, X_3] = Z_1, [X_2, X_4] = Z_1$ \\
\cdashline{2-3}
                  & \multirow{2}{*}{$L_{5,6}$}   & \vbox to .4cm{}$[X_1, X_2] = X_3, [X_1, X_3] = X_4, [X_1, X_4] = Z_1$,\\
                  &                                     & $[X_2, X_3]= Z_1$ \\
\cdashline{2-3}
                 & $L_{5,7}$  \vbox to .4cm{}              & $[X_1, X_2] = X_3, [X_1, X_3] = X_4, [X_1, X_4] = Z_1$ \\
\cdashline{2-3}
                        & $L_{5,8}$ \vbox to .4cm{}               & $[X_1, X_2] = Z_1, [X_1, X_3] = Z_1$ \\
\cdashline{2-3}
                        & $L_{5,9}$  \vbox to .4cm{}              & $[X_1, X_2] = X_3, [X_1, X_3] = Z_1, [X_2, X_3] = Z_2$ \\
\hline
\end{tabular}
$$
\caption{Lie algebras of $\dim \leq 5$.}
\label{Tabla:3}
\end{centering}
\end{table}

The Tables \ref{Tabla:4} and \ref{Tabla:6} contain a faithful minimal representation and a faithful minimal nilrepresentation of each Lie algebra of
dimension $\leq 5$ and in the last columns contains the necessary reference for the lower bound of
$\mu \text{ and } \mu_{nil}$. The matrices appearing in the tables stand for
$$\pi\left(\sum x_i X_i + \sum z_i Z_i + \sum a_i A_i\right),$$
where
$\{Z_1, \dots, Z_q, A_1, \dots, A_r\}$ is a basis of
$\mathfrak{z}(\g)$ such that
$\{Z_1, \dots, Z_q\}$ is a basis of
$\mathfrak{z}(\g)\cap [\g, \g]$ and
$\{A_1, \dots, A_r\}$ is a basis of a complement of any subspace of
$\mathfrak{z}(\g)\cap [\g, \g]$ in
$\mathfrak{z}(\g)$.
For instance, let
$\{X_1, X_2, Z_1, A_1\}$ be a basis of
$L_{4,2}$ and let
$\pi: L_{4,2} \rightarrow \mathfrak{gl}(\k^3)$ be a faithful representation of
$L_{4,2}$ defined by
$$
\pi\left(x_1 X_1 + x_2 X_2 + z_1 Z_1 + a_1 A_1 \right)= \left[
                                                        \begin{array}{ccc}
                                                        a_1 & x_1 & z_1 \\
                                                        0   & a_1 & x_2 \\
                                                        0   &  0  &  a_1
                                                        \end{array}
                                                        \right].
$$

\begin{table}[!h]
\begin{centering}
$$
\begin{tabular}{||c|c|c|c|c||c||}
\hline
$\pmb{\g}$ & $\pmb{\mu}_{nil}$ & $\pmb{\mu}$ & {\bf faith. nilrep.} & {\bf faith. rep.}& {\bf ref.}\\
\hline
\hline
\vbox to .5cm{}
$L_{1,1}$ & 2 & 1 & $\left[
                                           \begin{smallmatrix}
                                           0 & a_1 \\
                                           0 & 0
                                           \end{smallmatrix}
                                           \right]$                                         & $\left[
                     \begin{smallmatrix}
                     a_1
                     \end{smallmatrix}
                     \right]$    & Prop.\ref{munilabeliana}(\ref{abelian})\\
\hline
\vbox to .6cm{}
$L_{2,1}$ & 3 & 2  & $\left[
                                                   \begin{smallmatrix}
                                                   0 & a_1 & a_2 \\
                                                   0 & 0 & 0 \\
                                                   0 & 0 & 0
                                                   \end{smallmatrix}
                                                   \right]$
                                                & $\left[
                        \begin{smallmatrix}
                        a_1 & a_2 \\
                        0   & a_1
                        \end{smallmatrix}
                        \right]$   & Prop.\ref{munilabeliana}(\ref{abelian}) \\
\hline
\vbox to .8cm{}
$L_{3,1}$ & 4 & 3 & $\left[
                                                \begin{smallmatrix}
                                                0 & 0 & a_1 & a_2 \\
                                                0 & 0 & 0 & a_3 \\
                                                0 & 0 & 0 & 0 \\
                                                0 & 0 & 0 & 0
                                                \end{smallmatrix}
                                                \right]$    & $\left[
                       \begin{smallmatrix}
                       a_1 & 0 & a_2 \\
                       0 & a_1 & a_3 \\
                       0 & 0 & a_1
                       \end{smallmatrix}
                       \right]$  & Prop.\ref{munilabeliana}(\ref{abelian})\\
\hline
\vbox to .8cm{}
$L_{4,2}$  & 4 & 3 & $\left[
                                              \begin{smallmatrix}
                                              0 & x_1 & z & a_1 \\
                                              0 & 0 & x_2 & 0\\
                                              0 & 0 & 0 & 0\\
                                              0 & 0 & 0 & 0
                                              \end{smallmatrix}
                                              \right]$  & $\left[
                        \begin{smallmatrix}
                        a_1 & x_1 & z_1 \\
                        0 & a_1 & x_2 \\
                        0 & 0 & a_1
                        \end{smallmatrix}
                        \right]$   & \begin{tabular}{l}
                                      \cite{R}, since $L_{4,2}$\\
                                      is $L_{3,2} \oplus L_{1,1}$\\
                                      \end{tabular}\\
\hline
\vbox to .9cm{}
$L_{5,1}$ & 5 & 4 & $\left[
                                                   \begin{smallmatrix}
                                                   0 & 0 & a_1 & a_2 & a_3 \\
                                                   0 & 0 & 0 & a_4 & a_5 \\
                                                   0 & 0 & 0 & 0 & 0 \\
                                                   0 & 0 & 0 & 0 & 0 \\
                                                   0 & 0 & 0 & 0 & 0
                                                   \end{smallmatrix}
                                                   \right]$  & $\left[
                         \begin{smallmatrix}
                         a_1 & 0 & a_2 & a_3 \\
                         0 & a_1 & a_4 & a_5 \\
                         0 & 0 & a_1 & 0 \\
                         0 & 0 & 0 & a_1
                         \end{smallmatrix}
                         \right]$ & Prop.\ref{munilabeliana}(\ref{abelian})\\
\hline
\vbox to .9cm{}
$L_{5,2}$ & 5 & 4 & $\left[
                                                \begin{smallmatrix}
                                                0 & x_1 & z_1 & a_1 & a_2 \\
                                                0 & 0   & x_2 & 0 & 0 \\
                                                0 & 0   & 0   & 0 & 0 \\
                                                0 & 0   & 0   & 0 & 0 \\
                                                0 & 0   & 0   & 0 & 0
                                                \end{smallmatrix}
                                                \right]$   &   $\left[
                        \begin{smallmatrix}
                        a_1 & x_1 & z_1 & 0 \\
                        0 & a_1 & x_2 & 0 \\
                        0 & 0 & a_1 & 0 \\
                        0 & 0 & 0 & a_2
                        \end{smallmatrix}
                        \right]$    & \begin{tabular}{l}
                                      \cite{R}, since $L_{5,2}$\\
                                      is $L_{3,2} \oplus L_{2,1}$\\
                                      \end{tabular}\\
\hline
\end{tabular}
$$
\caption{Lie algebras such that $\mu < \mu_{nil}$.}
\label{Tabla:4}
\end{centering}
\end{table}

In the Table \ref{Tabla:6} all the Lie algebras, except
$L_{4,1} \text{ and } L_{5,3}$, verify that
$\mathfrak{z}(\g) \subseteq [\g, \g]$. Then, by Corollary \ref{coro:mumunil} we have
$\mu_{nil}(\g) = \mu(\g)$.

$$
\begin{tabular}{||c|c|c||c||}
\hline
$\pmb{\g}$ & $\pmb{\mu}$ & {\bf faithful nilrepresentation}  & {\bf ref.}\\
\hline
\hline
\vbox to .6cm{}
$L_{3,2}$ & 3 & $\left[\begin{smallmatrix}
             0 & x_1 & z_1 \\
             0 & 0 & x_2 \\
             0 & 0 & 0
             \end{smallmatrix}
             \right]$ & \begin{tabular}{l}
                        by Engel's Theorem, \\
                       we have $\mu_{nil}(L_{3,2}) \geq 3$\\
                       \end{tabular}\\
\hline
$L_{4,1}$ & 4  & \vbox to .8cm{}$\left[
              \begin{smallmatrix}
              0 & 0 & a_1 & a_2 \\
              0 & 0 & a_3 & a_4 \\
              0 & 0 & 0 & 0 \\
              0 & 0 & 0 & 0
              \end{smallmatrix}
              \right]$  & Prop.\ref{munilabeliana}(\ref{abelian}) \\
\hline
$L_{4,3}$ & 4 & \vbox to .9cm{}$\left[
                \begin{smallmatrix}
                0 & x_1 & 0 & z_1 \\
                0 & 0 & x_1 & x_3 \\
                0 & 0 & 0   & x_2 \\
                0 & 0 & 0   & 0
                \end{smallmatrix}
                \right]$    & Prop.\ref{munilabeliana}(\ref{filiform})\\
\hline
\end{tabular}
$$

\begin{table}[!h]
\begin{centering}
$$
\begin{tabular}{||c|c|c||c||}
\hline
$\pmb{\g}$ & $\pmb{\mu}$ & {\bf faithful nilrepresentation}  & {\bf ref.}\\
\hline
\hline
\vbox to .8cm{}
$L_{5,3}$ & 4 & $\left[
                \begin{smallmatrix}
                0 & x_1 & x_3 + a_1 & -2z_1 \\
                0 & 0 & x_2 & -x_3 + a_1 \\
                0 & 0 & 0 & x_1 \\
                0 & 0 & 0 & 0
                \end{smallmatrix}
                \right]$ &  Prop.\ref{munilabeliana}(\ref{directsum})\\
\hline
\vbox to .8cm{}
$L_{5,4}$ & 4 & $\left[
                 \begin{smallmatrix}
                         0 & x_1 & x_3 & z_1 \\
                         0 & 0 & 0 & x_2 \\
                         0 & 0 & 0 & x_4\\
                         0 & 0 & 0 & 0
                         \end{smallmatrix}
                         \right]$ & \begin{tabular}{l}
                                     by Engel's Theorem, \\
                                     we have $\mu_{nil}(L_{5,4}) \geq 4$ \\
                                     \end{tabular} \\
\hline
\vbox to .8cm{}
$L_{5,5}$ & 4 & $\left[
                                                \begin{smallmatrix}
                                                0 & x_1 & -x_4 & z_1 \\
                                                0 & 0 & x_1 & x_3 \\
                                                0 & 0 & 0 & x_2 \\
                                                0 & 0 & 0 & 0
                                                \end{smallmatrix}
                                                \right]$ & \begin{tabular}{l}
                                     by Engel's Theorem, \\
                                     we have $\mu_{nil}(L_{5,5}) \geq 4$ \\
                                     \end{tabular} \\
\hline
\vbox to 1cm{}
$L_{5,6}$ & 5 & $\left[
                        \begin{smallmatrix}
                        0 & x_1 & \frac{1}{2}x_2 & -\frac{1}{2}x_3 & z_1 \\
                        0 & 0 & x_1            & 0               & x_4 \\
                        0 & 0 & 0              & x_1             & x_3 \\
                        0 & 0 & 0              & 0               & x_2 \\
                        0 & 0 & 0              & 0               & 0
                        \end{smallmatrix}
                        \right]$ & Prop.\ref{munilabeliana}(\ref{filiform})\\
\hline
\vbox to 1cm{}
$L_{5,7}$ & 5 & $\left[
                                       \begin{smallmatrix}
                                        0 & x_1 & 0     &0     & z_1 \\
                                        0 & 0   & x_1   & 0    & x_4 \\
                                        0 & 0   & 0     & x_1  & x_3 \\
                                        0 & 0   & 0     & 0    & x_2 \\
                                        0 & 0   & 0     & 0    & 0
                                       \end{smallmatrix}
                                       \right]$ & Prop.\ref{munilabeliana}(\ref{filiform})\\
\hline
\vbox to .8cm{}
$L_{5,8}$ & 4 & $\left[
                         \begin{smallmatrix}
                         0 & x_1 & z_1 & z_2 \\
                         0 & 0 & x_2 & x_3 \\
                         0 & 0 & 0 & 0 \\
                         0 & 0 & 0 & 0
                         \end{smallmatrix}
                         \right]$ & \begin{tabular}{l}
                                     by Engel's Theorem, \\
                                     we have $\mu_{nil}(L_{5,8}) \geq 4$ \\
                                     \end{tabular} \\
\hline
\vbox to 1cm{}
$L_{5,9}$ & 5 & $\left[
                                       \begin{smallmatrix}
                                        0 & 0 & \frac{1}{2}x_2 & -\frac{1}{2}x_3 & z_2 \\
                                        0 & 0 & x_1            & 0               & z_1 \\
                                        0 & 0 & 0 & x_1 & x_3 \\
                                        0 & 0 & 0 & 0 & x_2 \\
                                        0 & 0 & 0 & 0 & 0
                                       \end{smallmatrix}
                                       \right]$  & Lemma \ref{lema:59} \\
\hline
\end{tabular}
$$
\caption{Lie algebras such that $\mu = \mu_{nil}$.}
\label{Tabla:6}
\end{centering}
\end{table}

\begin{lemma}\label{lema:59}
Let
$n \in \mathbb{N}$ and let
$\g$ be a Lie subalgebra of
$\mathfrak{n}_n(\k)$ isomorphic to
$L_{5,9}$. Then
$n \geq 5$.
\end{lemma}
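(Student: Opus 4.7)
The plan is to obstruct an embedding of $L_{5,9}$ into $\mathfrak{n}_n(\k)$ for $n \leq 4$ by comparing the third terms of the lower central series on the two sides. Since $\dim L_{5,9} = 5$, the cases $n \leq 3$ are ruled out by $\dim \mathfrak{n}_n(\k) = \binom{n}{2} \leq 3$, so only $n = 4$ requires real work.

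First I would read off from the defining brackets of $L_{5,9}$ that $[\g,\g] = \langle X_3, Z_1, Z_2 \rangle$ and $[\g,[\g,\g]] = \langle Z_1, Z_2 \rangle = \z(\g)$; in particular, the third term of the lower central series of $L_{5,9}$ is two-dimensional. On the matrix side, I would invoke the standard description: the $k$-th term of the lower central series of $\mathfrak{n}_n(\k)$ is the subspace of strictly upper triangular matrices whose first $k-1$ superdiagonals vanish. For $n = 4$ and $k = 3$ this yields $[\mathfrak{n}_4(\k), [\mathfrak{n}_4(\k), \mathfrak{n}_4(\k)]] = \k\, E_{1,4}$, a one-dimensional space.

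The final step is the observation that any injective Lie algebra homomorphism $\varphi : \g \hookrightarrow \mathfrak{n}_n(\k)$ sends each term of the lower central series of $\g$ injectively into the corresponding term of $\mathfrak{n}_n(\k)$. Applied to the third terms in the case $n=4$, this would embed a two-dimensional space into a one-dimensional one, a contradiction; hence $n \geq 5$.

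There is no substantial obstacle here: everything reduces to identifying $\z(L_{5,9})$ with the third term of its lower central series and a single dimension count against $[\mathfrak{n}_4(\k), [\mathfrak{n}_4(\k), \mathfrak{n}_4(\k)]]$. The fact that $L_{5,9}$ is the free $3$-step nilpotent Lie algebra on two generators is what makes its third lower central term as large as the center, and is the essential structural input that makes the comparison succeed.
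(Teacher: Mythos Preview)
Your proposal is correct and follows essentially the same argument as the paper: both compare $\dim[\g,[\g,\g]]=2$ against $\dim[\mathfrak{n}_4(\k),[\mathfrak{n}_4(\k),\mathfrak{n}_4(\k)]]=1$ to rule out $n=4$, with the smaller cases being trivial by dimension. Your write-up is slightly more explicit about the $n\leq 3$ cases and the structure of the lower central series of $\mathfrak{n}_n(\k)$, but the core idea is identical.
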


\begin{proof}
Suppose, contrary to our claim, that
$n= 4$. Since
$\g$ is isomorphic to
$L_{5,9}$, we get
$\dim [\g, [\g, \g]]= 2$ and since
$\g$ is a Lie subalgebra of
$\mathfrak{n}_4(\k)$, we have
$[\g, [\g, \g]] \subseteq [\mathfrak{n}_4(\k), [\mathfrak{n}_4(\k), \mathfrak{n}_4(\k)]]$. It follows that
$$2 \leq \dim [\mathfrak{n}_4(\k), [\mathfrak{n}_4(\k), \mathfrak{n}_4(\k)]],$$
which is impossible and the proof is complete.
\end{proof}

\end{section}

%====================================================================================

\section{Main Theorem}

%====================================================================================

%====================================================================================

\subsection{Nilpotent Lie algebras of dimension $6$}\label{sec:dimen6}

%====================================================================================

In dimension
$6$ we used the De Graaf's classification (see \cite{Gr}). The brackets, for each Lie algebra
$L_{6,j}$ considered bellow, are described in a basis
$B = \{X_1, \dots, X_p, Z_1, \dots, Z_q\}$ such that
$\{Z_1, \dots, Z_q\}$ is a basis of the center
$\mathfrak{z}(L_{6,j})$ of the Lie algebra
$L_{6,j}$.

\noindent \begin{tabular}{clcl}
${\bullet}$ & $L_{6,j}$ & $=$ & $L_{5,j} \oplus \k$ for all $j= 1, \dots, 9$.\\
${\bullet}$ & $L_{6,10}$ & $:$ & $[X_1, X_2] = X_3, [X_1, X_3] = Z_1, [X_4, X_5] = Z_1$.\\
${\bullet}$ & $L_{6,11}$ & $:$ & $[X_1, X_2] = X_3, [X_1, X_3] = X_4, [X_1, X_4] = Z_1, [X_2, X_3] = Z_1$,\\
            &            &     & $[X_2, X_5] = Z_1$.\\
${\bullet}$ & $L_{6,12}$ & $:$ & $[X_1, X_2] = X_3, [X_1, X_3] = X_4, [X_1, X_4] = Z_1, [X_2, X_5] = Z_1$.\\
\end{tabular}

\noindent \begin{tabular}{clcl}
${\bullet}$ & $L_{6,13}$ & $:$ & $[X_1, X_2] = X_3, [X_1, X_3] = X_5, [X_2, X_4] = X_5, [X_1, X_5] = Z_1$,\\
            &            &     & $[X_3 , X_4] = Z_1$.\\
${\bullet}$ & $L_{6,14}$ & $:$ & $[X_1, X_2] = X_3, [X_1, X_3] = X_4, [X_1, X_4] = X_5, [X_2, X_3] = X_5$,\\
            &            &     & $[X_2 , X_5] = Z_1, [X_3, X_4] = - Z_1$.\\
${\bullet}$ & $L_{6,15}$ & $:$ & $[X_1, X_2] = X_3, [X_1, X_3] = X_4, [X_1, X_4] = X_5, [X_2, X_3] = X_5$,\\
            &            &     & $ [X_1 , X_5] = Z_1, [X_2, X_4] = Z_1$.\\
${\bullet}$ & $L_{6,16}$ & $:$ & $[X_1, X_2] = X_3, [X_1, X_3] = X_4,  [X_1, X_4] = X_5, [X_2, X_5] = Z_1$,\\
            &            &     & $[X_3 , X_4] = -Z_1$.\\
${\bullet}$ & $L_{6,17}$ & $:$ & $[X_1, X_2] = X_3, [X_1, X_3] = X_4,  [X_1, X_4] = X_5, [X_1, X_5] = Z_1$,\\
            &            &     & $[X_2 , X_3] = Z_1$.\\
${\bullet}$ & $L_{6,18}$ & $:$ & $[X_1, X_2] = X_3, [X_1, X_3] = X_4,  [X_1, X_4] = X_5, [X_1, X_5] = Z_1$.\\
${\bullet}$ & $L_{6,19}(\epsilon)$ & $:$ & $[X_1, X_2] = X_4 , [X_1, X_3] = X_5  ,  [X_2, X_4] = Z_1, [X_3, X_5] = \epsilon Z_1$\\
            &                      &     & Isomorphism: $L_{6,19}(\epsilon) \cong L_{6,19}(\delta)$ if and only if there is and \\
            &                      &     & $\alpha \in \k^*$ such that $\delta = \alpha^2 \epsilon$.\\
${\bullet}$ & $L_{6,20}$ & $:$ & $[X_1, X_2] = X_4 , [X_1, X_3] = X_5  ,  [X_1, X_5] = Z_1, [X_2, X_4] = Z_1$.\\
${\bullet}$ & $L_{6,21}(\epsilon)$ & $:$ & $[X_1, X_2] = X_3, [X_1, X_3] = X_4  ,  [X_2, X_3] = X_5 , [X_1, X_4] = Z_1$,\\
            &                      &     & $ [X_2, X_5] = \epsilon Z_1$. Isomorphism: $L_{6,21}(\epsilon) \cong L_{6,21}(\delta)$ if and only\\
            &                      &     & if there is  and $\alpha \in \k^*$ such that $\delta = \alpha^2 \epsilon$.\\
${\bullet}$ & $L_{6,22}(\epsilon)$ & $:$ & $[X_1, X_2] = Z_1, [X_1, X_3] = Z_2,  [X_2, X_4] = \epsilon Z_2, [X_3, X_4] = Z_1$\\
            &                      &     & Isomorphism: $L_{6,22}(\epsilon) \cong L_{6,22}(\delta)$ if and only if there is and\\
            &                      &     & $\alpha \in \k^*$ such that $\delta = \alpha^2 \epsilon$.\\
${\bullet}$ & $L_{6,23}$ & $:$ & $[X_1, X_2] = X_3 , [X_1, X_3] = Z_1,  [X_1, X_4] = Z_2, [X_2, X_4] = Z_1$.\\
${\bullet}$ & $L_{6,24}(\epsilon)$ & $:$ & $[X_1, X_2] = X_3 , [X_1, X_3] = Z_1,  [X_1, X_4] = \epsilon Z_2, [X_2, X_3] = Z_2$,\\
            &                      &     & $ [X_2, X_4] = Z_1$ Isomorphism: $L_{6,24}(\epsilon) \cong L_{6,24}(\delta)$ if and only\\
            &                      &     & if there is and $\alpha \in \k^*$ such that $\delta = \alpha^2 \epsilon$.\\
${\bullet}$ & $L_{6,25}$ & $:$ &  $[X_1, X_2] = X_3 , [X_1, X_3] = Z_1,  [X_1, X_4] = Z_2$.\\
${\bullet}$ & $L_{6,26}$ & $:$ &  $[X_1, X_2] = Z_1, [X_1, X_3] = Z_2,  [X_2, X_3] = Z_3$.\\

\end{tabular}

%====================================================================================

\subsection{Minimal faithful representation and faithful nilrepresentation for the nilpotent Lie algebras of dimension $6$}\label{sec:dim6}

%====================================================================================

Let
$\mathfrak{t}_n(\k)$ be the set of upper triangular matrices of size
$n$ over a field
$\k$.
Let
$\k$ be an algebraically closed field and let
$(\pi, V)$ be a faithful representation of
$L_{6,j}$. If
$\dim V= n$, by Lie's Theorem there exist a basis
$B$ of
$V$ such that
$[\pi(X)]_B \in \mathfrak{t}_n(\k)$ for all
$X \in L_{6,j}$. Since
$\dim L_{6,j} = 6$ and
$L_{6,j}$ is a nilpotent Lie algebra, we obtain
\begin{equation}\label{eq:Lie}
4 \leq \mu(L_{6,j}) \;\;\;\text{ for all } j= 1, \dots, 26.
\end{equation}

Now assume that
$\k$ is arbitrary (with
$\text{char}(\k)=0$) and let
$\g$ be a Lie algebra over
$\k$. Let
$\overline{\k}$ be the algebraic closure of
$\k$. The tensor product
$\g \otimes \overline{\k}$ has a Lie algebra structure with bracket
$$
[X_1 \otimes a_1, X_2 \otimes a_2] = [X_1, X_2] \otimes a_1a_2,
$$
$X_i \in \g, a_i \in \overline{\k}$. Note that
$\g \otimes \overline{\k}$ could viewed as a Lie algebra over
$\overline{\k}$.

Let
$(\pi, V)$ be a representation of
$\g$. We define the linear mapping
$\widetilde{\pi}: \g \otimes \overline{\k} \rightarrow \mathfrak{gl}(V \otimes \overline{\k})$ over
$\overline{\k}$ given by
$\widetilde{\pi}(X \otimes a)(v \otimes b) = \pi(X)(v) \otimes ba$. By a straightforward calculation we have
$(\widetilde{\pi}, V \otimes \overline{\k})$ is a representation of
$\g \otimes \overline{\k}$ over
$\overline{\k}$ and if
$(\pi, V)$ is faithful then
$(\widetilde{\pi}, V \otimes \overline{\k})$ is faithful too. Thus
\begin{equation}\label{eq:tensor}
\mu(\g \otimes \overline{\k}) \leq \mu(\g)
\end{equation}

Therefore, if
$\g$ is a nilpotent Lie algebra over a field
$\k$ of
$\text{char}(\k) = 0$ by equations (\ref{eq:Lie}), (\ref{eq:tensor}) and since
$\mu(\g) \leq \mu_{nil}(\g)$ and  we get
\begin{equation}\label{eq:cotainf}
4 \leq \mu(\g) \;\; \text{ and } \;\; 4 \leq \mu_{nil}(\g).
\end{equation}

The Tables \ref{Tabla:7} and \ref{Tabla:8} contain a faithful minimal representation, and a faithful minimal nilrepresentation, of each the Lie algebra
$L_{6,j}$ for all
$j= 1, \dots,26$ and in the last columns contains the necessary reference for the lower bound of
$\mu \text{ and } \mu_{nil}$.
The matrices appearing in the tables stand for
$$\pi\left(\sum x_i X_i + \sum z_i Z_i + \sum a_i A_i\right),$$
where
$\{Z_1, \dots, Z_q, A_1, \dots, A_r\}$ is a basis of
$\mathfrak{z}(\g)$ such that
$\{Z_1, \dots, Z_q\}$ is a basis of
$\mathfrak{z}(\g)\cap [\g, \g]$ and
$\{A_1, \dots, A_r\}$ is a basis of a complement any subspace of
$\mathfrak{z}(\g)\cap [\g, \g]$ in
$\mathfrak{z}(\g)$.

\begin{table}[!h]
\begin{centering}
$$
\begin{tabular}{||c|c|c|c|c||c||}
\hline
$\pmb{\g}$ & $\pmb{\mu}_{n}$ & $\pmb{\mu}$ &  {\bf faithful nilrepresent.} & {\bf faithful represent.} & {\bf ref.}\\
\hline
\hline
\vbox to .8cm{}
$L_{6,3}$ & 5 & 4 & $\left[
                    \begin{smallmatrix}
                    0   & x_1 & x_3 + a_1   &   -2z_1  & a_2\\
                    0   & 0   & x_2         & -x_3 + a_1 & 0 \\
                    0   &  0  & 0           & x_1 & 0 \\
                    0   &  0  & 0           & 0   & 0 \\
                    0   &  0  & 0           & 0   & 0
                    \end{smallmatrix}
                    \right]$ &
                               $\left[
                               \begin{smallmatrix}
                               a_2 & x_1 & x_3 + a_1   &   -2z_1 \\
                               0   & a_2 & x_2         & -x_3 + a_1 \\
                               0   &  0  & a_2         & x_1 \\
                               0   &  0  & 0           & a_2
                               \end{smallmatrix}
                               \right]$ & \begin{tabular}{l}
                                           - Eq.(\ref{remark:L19})\\
                                           - Eq.(\ref{eq:cotainf})\\
                                           \end{tabular}\\
\hline
\vbox to .9cm{}
$L_{6,4}$ & 5 & 4 & $\left[
                     \begin{smallmatrix}
                     0 & x_1 & x_2 & z_1  & a_1\\
                     0 & 0   & 0 & x_3  & 0 \\
                     0 & 0 & 0   & x_4  & 0 \\
                     0 & 0 & 0 & 0  & 0 \\
                     0 & 0 & 0 & 0  & 0
                     \end{smallmatrix}
                     \right]$       &    $\left[
                                          \begin{smallmatrix}
                                          a_1 & x_1 & x_2 & z_1 \\
                                          0 & a_1 & 0 & x_3 \\
                                          0 & 0 & a_1 & x_4 \\
                                          0 & 0 & 0 & a_1
                                          \end{smallmatrix}
                                          \right]$ & \begin{tabular}{l}
                                           -Eq.(\ref{remark:L19})\\
                                           -Eq.(\ref{eq:cotainf})\\
                                           \end{tabular}\\
\hline
\vbox to .9cm{}
$L_{6,5}$ & 5 & 4 & $\left[
\begin{smallmatrix}
0 & x_1 & -x_4 & z_1  & a_1 \\
0 & 0   & x_1 & x_3 & 0 \\
0 & 0   & 0   & x_2 & 0\\
0 & 0   & 0   & 0   & 0\\
0 & 0   & 0   & 0   & 0
\end{smallmatrix}
\right]$ & $\left[
\begin{smallmatrix}
a_1 & x_1 & -x_4 & z_1 \\
0 & a_1 & x_1 & x_3 \\
0 & 0 & a_1 & x_2 \\
0 & 0 & 0 & a_1
\end{smallmatrix}
\right]$ & \begin{tabular}{l}
           -Eq.(\ref{remark:L19})\\
           -Eq.(\ref{eq:cotainf})\\
           \end{tabular}\\
\hline
\vbox to .9cm{}
$L_{6,8}$ & 5 & 4 & $\left[
\begin{smallmatrix}
0 & x_1 & z_1  & z_2 & a_1\\
0   & 0 & x_2  & x_3 & 0\\
0   & 0   & 0  & 0  & 0\\
0   & 0   & 0  & 0 & 0 \\
0   & 0   & 0  & 0 & 0
\end{smallmatrix}
\right]$ &  $\left[
\begin{smallmatrix}
a_1 & x_1 & z_1  & z_2 \\
0   & a_1 & x_2  & x_3\\
0   & 0   & a_1  & 0 \\
0   & 0   & 0    & a_1
\end{smallmatrix}
\right]$ & \begin{tabular}{l}
           -Eq.(\ref{remark:L19})\\
           -Eq.(\ref{eq:cotainf})\\
           \end{tabular}\\
\hline
\vbox to 1.05cm{}
$L_{6,9}$ & 6 & 5 & $\left[
\begin{smallmatrix}
0   & 0   & \frac{1}{2}x_2 & -\frac{1}{2}x_3 & z_2  & a_1\\
0   & 0   & x_1            & 0               & z_1  & 0\\
0   & 0   & 0              & x_1 & x_3 & 0\\
0   & 0   & 0   & 0        & x_2 & 0\\
0 & 0 & 0 & 0 & 0   & 0 \\
0 & 0 & 0 & 0 & 0   & 0
\end{smallmatrix}
\right]$ & $\left[
\begin{smallmatrix}
a_1 & 0   & \frac{1}{2}x_2 & -\frac{1}{2}x_3 & z_2 \\
0   & a_1 & x_1            & 0               & z_1 \\
0   & 0   & a_1 & x_1 & x_3 \\
0   & 0   & 0   & a_1 & x_2 \\
0 & 0 & 0 & 0 & a_1
\end{smallmatrix}
\right]$ &  \begin{tabular}{l}
           - Cor.\ref{coro:L69}\\
           - Pr.\ref{munilabeliana}(\ref{directsum})\\
           \end{tabular}\\
\hline
\end{tabular}
$$
\caption{Lie algebras such that $\mu < \mu_{nil}$.}
\label{Tabla:7}
\end{centering}
\end{table}

In the Table \ref{Tabla:8} all the Lie algebras, except the Lie algebras
$L_{6,1}, L_{6,2}$ and
$L_{6,7}$, verify that
$\mathfrak{z}(\g) \subseteq [\g, \g]$ then, by Corollary \ref{coro:mumunil} we have
$\mu_{nil}(\g) = \mu(\g)$.

$$
\begin{tabular}{||c|c|c||c||}
\hline
$\pmb{\g}$ & $\pmb{\mu}$ &  {\bf faithful nilrepresent.} & {\bf ref.}\\
\hline
\vbox to .9cm{}
$L_{6,1}$ & 5 & $\left[
                        \begin{smallmatrix}
                        0 & 0 & a_1 & a_2 & a_3 \\
                        0 & 0 & a_4 & a_5 & a_6 \\
                        0 & 0 & 0   & 0   & 0 \\
                        0 & 0 & 0   & 0   & 0 \\
                        0 & 0 & 0   & 0   & 0
                        \end{smallmatrix}
                        \right]$ & Pr.\ref{munilabeliana}(\ref{abelian})\\
\hline
\vbox to .9cm{}
$L_{6,2}$ & 5 & $\left[
                    \begin{smallmatrix}
                                                    0 & 0 & x_1 & z_1 & a_1 \\
                                                    0 & 0 & 0   & a_2 & a_3 \\
                                                    0 & 0 & 0   & x_2   & 0 \\
                                                    0 & 0 & 0   & 0   & 0 \\
                                                    0 & 0 & 0   & 0   & 0
                                                    \end{smallmatrix}
                                                    \right]$ & \cite{R}\\
\hline
\end{tabular}
$$

$$
\begin{tabular}{||c|c|c||c|}
\hline
$\pmb{\g}$ & $\pmb{\mu}$ &  {\bf faithful nilrepresent.} & {\bf ref.}\\
\hline
\hline
\vbox to 1.cm{}
$L_{6,6}$ & 5 & $\left[\begin{smallmatrix}
                 0   & x_1 & 3x_2   & x_4 + a_1  & -3z_1 \\
                 0   & 0   & x_1 & x_3   & -2x_4 + a_1\\
                 0   & 0   & 0   & x_2   & -x_3 \\
                 0   & 0   & 0   & 0     & x_1 \\
                 0   & 0   & 0   & 0     & 0
                 \end{smallmatrix}
                 \right]$ & Pr.\ref{munilabeliana}(\ref{directsum})\\
\hline
\vbox to 1.05cm{}
$L_{6,7}$ & 5 & $\left[\begin{smallmatrix}
                 0   & x_1 & 0   & x_4 + a_1  & -3z_1 \\
                 0   & 0   & x_1 & x_3   & -2x_4 + a_1\\
                 0   & 0   & 0   & x_2   & -x_3 \\
                 0   & 0   & 0   & 0     & x_1 \\
                 0   & 0   & 0   & 0     & 0
                 \end{smallmatrix}
                 \right]$ & Pr.\ref{munilabeliana}(\ref{directsum})\\
\hline
\vbox to 1.05cm{}
$L_{6,10}$ & 5 & $\left[
\begin{smallmatrix}
0   & x_1 & 0      & x_4 & z_1 \\
0   & 0   & x_1    & 0   & x_3 \\
0   & 0   & 0      & 0   & x_2 \\
0   & 0   & 0      & 0   & x_5 \\
0   & 0   & 0      & 0   & 0
\end{smallmatrix}
\right]$ & Eq.(\ref{remark:L19})\\
\hline
\vbox to 1.05cm{}
$L_{6,11}$ & 5 &$\left[
\begin{smallmatrix}
0 & x_1 & x_2 & -x_5 & z_1 \\
0 & 0 & x_1 & x_2 & x_4 \\
0 & 0 & 0 & x_1 & x_3 \\
0 & 0 & 0 & 0 & x_2 \\
0 & 0 & 0 & 0 & 0
\end{smallmatrix}
\right]$ & Eq.(\ref{remark:L19})\\
\hline
\vbox to 1.05cm{}
$L_{6,12}$ & 5 & $\left[
\begin{smallmatrix}
0 & x_1 & 0 & -x_5 & z_1 \\
0 & 0 & x_1 & 0 & x_4 \\
0 & 0 & 0 & x_1 & x_3 \\
0 & 0 & 0 & 0 & x_2 \\
0 & 0 & 0 & 0 & 0
\end{smallmatrix}
\right]$ & Eq.(\ref{remark:L19})\\
\hline
\vbox to 1.05cm{}
$L_{6,13}$ & 5 & $\left[
\begin{smallmatrix}
0 & x_1 & -x_4 & 0 & z_1\\
0 & 0 & x_1 & -x_4 & x_5 \\
0 & 0 & 0 & x_1 & x_3 \\
0 & 0 & 0 & 0 & x_2 \\
0 & 0 & 0 & 0 & 0
\end{smallmatrix}
\right]$  & Eq.(\ref{remark:L19})\\
\hline
\vbox to 1.1cm{}
$L_{6,14}$ & 6 & $\left[
\begin{smallmatrix}
0 & x_2 & -x_3 & 0              & 0                & z_1 \\
0 &  0  &  x_1 & \frac{1}{2}x_2 & - \frac{1}{2}x_3 & x_5 \\
0 &  0  &  0   &     x_1        &     0            & x_4 \\
0 &  0  &  0   &     0          &     x_1          & x_3 \\
0 &  0  &  0   &     0          &     0            & x_2 \\
0 &  0  &  0   &     0          &     0            & 0
\end{smallmatrix}
\right]$ & Pr.\ref{munilabeliana}(\ref{filiform})\\
\hline
\vbox to 1.15cm{}
$L_{6,15}$ & 6 & $\left[
\begin{smallmatrix}
0 & x_1 & \frac{1}{2}x_2 &    0            & -\frac{1}{2}x_4 & z_1 \\
0 &  0  &  x_1           & \frac{1}{2}x_2  & - \frac{1}{2}x_3 & x_5 \\
0 &  0  &  0             &     x_1         &     0            & x_4 \\
0 &  0  &  0             &     0           &     x_1          & x_3 \\
0 &  0  &  0             &     0           &     0            & x_2 \\
0 &  0  &  0             &     0           &     0            & 0
\end{smallmatrix}
\right]$ & Pr.\ref{munilabeliana}(\ref{filiform})\\
\hline
\vbox to 1.1cm{}
$L_{6,16}$ & 6 & $\left[
\begin{smallmatrix}
0 & x_1 & x_3 & -2x_4  & 3x_5   & -3z_1 \\
0 &  0  &  x_2 & -x_3  & x_4   & 0 \\
0 &  0  &  0   &     x_1        &    0             & x_4 \\
0 &  0  &  0   &     0          &     x_1          & x_3 \\
0 &  0  &  0   &     0          &     0            & x_2 \\
0 &  0  &  0   &     0          &     0            & 0
\end{smallmatrix}
\right]$ & Pr.\ref{munilabeliana}(\ref{filiform})\\
\hline
\vbox to 1.1cm{}
$L_{6,17}$ & 6 & $\left[
\begin{smallmatrix}
0 & x_1 & 0 & \frac{1}{2}x_2    & -\frac{1}{2}x_3  & z_1 \\
0 &  0  &  x_1 & 0    & 0   & x_5 \\
0 &  0  &  0   & x_1  & 0   & x_4 \\
0 &  0  &  0   &  0   & x_1 & x_3 \\
0 &  0  &  0   &  0   &  0  & x_2 \\
0 &  0  &  0   &  0   &  0  & 0
\end{smallmatrix}
\right]$  & Pr.\ref{munilabeliana}(\ref{filiform})\\
\hline
\vbox to 1.1cm{}
$L_{6,18}$ & 6 & $\left[
\begin{smallmatrix}
0 & x_1 & 0 & 0    & 0   & z_1 \\
0 &  0  &  x_1 & 0    & 0   & x_5 \\
0 &  0  &  0   & x_1  & 0   & x_4 \\
0 &  0  &  0   &  0   & x_1 & x_3 \\
0 &  0  &  0   &  0   &  0  & x_2 \\
0 &  0  &  0   &  0   &  0  & 0
\end{smallmatrix}
\right]$ & Pr.\ref{munilabeliana}(\ref{filiform})\\
\hline
\vbox to .6cm{}
$L_{6,19}(\epsilon)$ &  4 & $\left[
\begin{smallmatrix}
0 & -\frac{1}{\sqrt{-\epsilon}}x_2 + x_3 &  \frac{1}{\sqrt{-\epsilon}}x_4 - x_5  &  -\frac{2}{\sqrt{-\epsilon}}z_1 \\
0 &  0  & x_1 & x_4 + \sqrt{-\epsilon}x_5  \\
0 &  0  &  0  & x_2 + \sqrt{-\epsilon}x_3 \\
0& 0 & 0 & 0
\end{smallmatrix}
\right]$ &\begin{tabular}{l}
Pr.\ref{LemaL19}; if there \\
exists $\alpha \in \k^*$\\
such that \\
$\epsilon + \alpha^2 = 0$\\
\end{tabular}\\
\hline
\end{tabular}
$$

\begin{table}[!h]
\begin{centering}
$$
\begin{tabular}{||c|c|c||c||}
\hline
$\pmb{\g}$ & $\pmb{\mu}$ &  {\bf faithful nilrepresent.} & {\bf ref.}\\
\hline
\hline
\vbox to .9cm{}
$L_{6,19}(\epsilon)$        & 5 & $\left[
\begin{smallmatrix}
0 & x_1 & x_4 & x_5 & z_1 \\
0 & 0 & x_2 & x_3 & 0 \\
0 & 0 & 0 & 0 & -x_2 \\
0 & 0 & 0 & 0 & -\epsilon x_3 \\
0 & 0 & 0 & 0 & 0
\end{smallmatrix}
\right]$ & \begin{tabular}{l}
Pr.\ref{LemaL19}; if\\
$\epsilon + \alpha^2 \neq 0$\\
for all $\alpha \in \k^*$\\
\end{tabular}\\
\hline
\vbox to .9cm{}
$L_{6,20}$ & 5 & $\left[
\begin{smallmatrix}
0 & x_1 & 0 & x_4 & z_1\\
0 & 0 & x_1 & x_2 & x_5 \\
0 & 0 & 0 & 0 & x_3 \\
0 & 0 & 0 & 0 & -x_2 \\
0 & 0 & 0 & 0 & 0
\end{smallmatrix}
\right]$ & Eq.(\ref{remark:L19})\\
\hline
\vbox to .9cm{}
\multirow{4}{*}{$L_{6,21}(\epsilon)$} & 5 & $\left[
\begin{smallmatrix}
0 & -x_1 + x_2 & (\epsilon + 1)x_3 & -x_4-\epsilon x_5 & 3z_1\\
0 & 0 & x_1-(\epsilon + 2)x_2  & x_3 & -2x_4 + \epsilon x_5 \\
0 & 0 & 0 & x_2 & -x_3 \\
0 & 0 & 0 & 0 & 2x_2 + x_1\\
0 & 0 & 0 & 0 & 0
\end{smallmatrix}
\right]$ &\begin{tabular}{l}
Eq.(\ref{remark:L19}); \\
if $\epsilon \neq 0$\\
\end{tabular}\\
\cline{2-4}
\vbox to .9cm{}
        & 5 & $\left[
\begin{smallmatrix}
0 & -x_1 + x_2 + x_3& -x_3 - 2 x_4 -x_5 & x_4- x_5 & 3z_1\\
0 & 0 & x_2 & -x_3 & x_4 - x_5 \\
0 & 0 & 0 &x_1 + x_2 & -x_3 \\
0 & 0 & 0 & 0 &  x_1\\
0 & 0 & 0 & 0 & 0
\end{smallmatrix}
\right]$ & \begin{tabular}{l}
Eq.(\ref{remark:L19}); \\
if $\epsilon= 0$.\\
\end{tabular}\\
\hline
\vbox to .9cm{}
$L_{6,22}(\epsilon)$ & 5 & $\left[
\begin{smallmatrix}
0 & x_1 & x_4 & z_1 & z_2\\
0 & 0   &  0  & x_2 & x_3 \\
0 & 0   & 0   & x_3 & \epsilon x_2 \\
0 & 0 & 0  & 0 & 0  \\
0 & 0 & 0 & 0 & 0
\end{smallmatrix}
\right]$ & Eq.(\ref{remark:L19})\\
\hline
\vbox to .9cm{}
$L_{6,23}$ & 5 & $\left[
\begin{smallmatrix}
0 & x_1 & -x_4 & z_2 & z_1\\
0 & 0   & x_1  & 0   & x_3 \\
0 & 0   & 0    & x_1 & x_2 \\
0 & 0   & 0    & 0   & 0 \\
0 & 0   & 0    & 0   & 0
\end{smallmatrix}
\right]$ & Eq.(\ref{remark:L19})\\
\hline
\vbox to .9cm{}
\multirow{4}{*}{$L_{6,24}(\epsilon)$} & 5 & Remark \ref{faithL624} &\begin{tabular}{l}
Cor.\ref{cro:l624}; if there \\
exists $a \in \k$\\
such that \\
$a^2 - \epsilon = 0$\\
\end{tabular}\\
\cline{2-4}
\vbox to 1cm{}
        & 6 & $\left[
\begin{smallmatrix}
0 & x_2& x_1 & x_3& -2 z_1& -z_2\\
0 & 0  & 0   & 0  & -2x_4   & 0 \\
0 & 0  & 0   &x_2 & -x_3  & -\epsilon x_4 \\
0 & 0  & 0   & 0  &  x_1  & x_2\\
0 & 0  & 0   & 0  & 0     & 0 \\
0 & 0  & 0   & 0  & 0     & 0 
\end{smallmatrix}
\right]$ & \begin{tabular}{l}
Cor.\ref{cro:l624}; if\\
$a^2 -\epsilon \neq 0$\\
for all $a \in \k$\\
\end{tabular}\\
\hline
\vbox to .9cm{}
$L_{6,25}$ & 5 & $\left[
\begin{smallmatrix}
0 & x_1 & x_3  & 2 z_1  & z_2\\
0 & 0   & x_2  & x_3    & x_4 \\
0 & 0   & 0    & -x_1 & 0 \\
0 & 0 & 0  & 0 & 0  \\
0 & 0 & 0 & 0 & 0
\end{smallmatrix}
\right]$ & Eq.(\ref{remark:L19})\\
\hline
\vbox to .9cm{}
$L_{6,26}$ & 5 & $\left[
\begin{smallmatrix}
0 & 0 & x_1 & z_1& z_2\\
0 & 0 & x_2 & 0  & z_3 \\
0 & 0 & 0   & x_2 & x_3 \\
0 & 0 & 0  & 0 & 0  \\
0 & 0 & 0 & 0 & 0
\end{smallmatrix}
\right]$ & Eq.(\ref{remark:L19})\\
\hline
\end{tabular}
$$
\caption{Lie algebras such that $\mu= \mu_{nil}$}
\label{Tabla:8}
\end{centering}
\end{table}

\begin{remark}\label{faithL624}
For space reason, we give a minimal faithful representation of the Lie algebra
$L_{6,24}(\epsilon)$ (if there exists $a \in \k$ such that $a^2 - \epsilon = 0$) in this remark:
$$\left[
\begin{smallmatrix}
0 & \sqrt{\epsilon}x_1 + x_2 & (-\epsilon + \sqrt{\epsilon})x_4 + (3\epsilon -1)x_3 & (3\sqrt{\epsilon} - 1)z_1 + (3 -\sqrt{\epsilon})z_2  & (-\sqrt{\epsilon}+ 1)z_1 + (-\sqrt{\epsilon}+ 1)z_2\\
0 & 0 & x_1 + x_2  & (-\epsilon + 4\sqrt{\epsilon} - 1)x_4 + 2x_3 & (-\epsilon + 1)x_4 \\
0 & 0 & 0 & -x_1 + x_2 & x_1 + x_2 \\
0 & 0 & 0 & 0 & 0\\
0 & 0 & 0 & 0 & 0
\end{smallmatrix}
\right].$$
\end{remark}

%====================================================================================

\subsection{$\mu \text{ and } \mu_{nil}$ of $L_{6,19}(\epsilon)$ for $\epsilon \in \k$}\label{sec:L619}

%====================================================================================

A basis of the Lie algebra
$L_{6,19}(\epsilon)$ is
$B=\{X_1, X_2, X_3, X_4, X_5, Z_1\}$ such that the brackets are
\begin{equation}\label{eq:6}
[X_1, X_2] = X_4,\; [X_1, X_3] = X_5,\;  [X_2, X_4] = Z_1,\; [X_3, X_5] = \epsilon Z_1,
\end{equation}
(see \S \ref{sec:dimen6}). The Lie algebra
$L_{6,19}(\epsilon) \text{ and } L_{6,19}(\delta)$ are isomorphic if and only if there exist
$\alpha \in \k^*$ such that
\begin{equation}\label{eq:delta}
\epsilon = \alpha^2 \delta.
\end{equation}

\begin{proposition}\label{LemaL19}
Let
$\k$ be a field of characteristic
$\neq 2$ then
$$\mu_{nil}(L_{6,19}(\epsilon)) =
         \begin{cases}
         4, & \text{ if } \exists \alpha \in \k^* : \epsilon + \alpha^2 = 0; \\
         5, & \text{ if } \forall \alpha \in \k^* : \epsilon + \alpha^2 \neq 0;
         \end{cases}\text{ }$$
and
$\mu(L_{6,19}(\epsilon))= \mu_{nil}(L_{6,19}(\epsilon))$ for all
$\epsilon \in \k$.
\end{proposition}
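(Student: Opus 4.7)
My plan is to handle both $\mu$ and $\mu_{nil}$ together by first reducing to $\mu_{nil}$. A direct calculation shows that $\mathfrak{z}(L_{6,19}(\epsilon))$ equals $\text{span}\{Z_1\}$ when $\epsilon \neq 0$ and $\text{span}\{X_5, Z_1\}$ when $\epsilon = 0$; in either case it is contained in the derived algebra $[L_{6,19}(\epsilon), L_{6,19}(\epsilon)] = \text{span}\{X_4, X_5, Z_1\}$. By Corollary~\ref{coro:mumunil} this gives $\mu(L_{6,19}(\epsilon)) = \mu_{nil}(L_{6,19}(\epsilon))$, so from now on I would work only with $\mu_{nil}$.

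For the upper bounds I would verify directly that the matrix assignments displayed in Table~\ref{Tabla:8} define faithful nilrepresentations (each image is strictly upper triangular and injectivity is obvious from the matrix entries, so the only non-trivial check is compatibility with the brackets~(\ref{eq:6})). The $4\times 4$ formula, whose entries depend on an element $\sqrt{-\epsilon}\in\k^{*}$, yields $\mu_{nil}(L_{6,19}(\epsilon))\le 4$ whenever there exists $\alpha\in\k^{*}$ with $\alpha^{2}+\epsilon=0$, while the $5\times 5$ formula yields $\mu_{nil}(L_{6,19}(\epsilon))\le 5$ in general. The lower bound $\mu_{nil}(L_{6,19}(\epsilon))\ge 4$ is immediate from~(\ref{eq:cotainf}), which settles Case~1.

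The real content is the lower bound $\mu_{nil}(L_{6,19}(\epsilon))\ge 5$ when $\alpha^{2}+\epsilon\ne 0$ for every $\alpha\in\k^{*}$. My approach is by contradiction: suppose $\pi\colon L_{6,19}(\epsilon)\to\mathfrak{gl}(V)$ is a faithful nilrepresentation with $\dim V=4$. By Engel's theorem I may choose a basis so that $\pi(L_{6,19}(\epsilon))\subseteq\mathfrak{n}_{4}(\k)$. Comparing dimensions, $\dim\pi(L_{6,19}(\epsilon))=6=\dim\mathfrak{n}_{4}(\k)$ together with injectivity of $\pi$ force $\pi(L_{6,19}(\epsilon))=\mathfrak{n}_{4}(\k)$, so $L_{6,19}(\epsilon)\cong\mathfrak{n}_{4}(\k)$ as Lie algebras.

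The main obstacle is then to pin down which parameter $\delta$ in the family gives $L_{6,19}(\delta)\cong\mathfrak{n}_{4}(\k)$. I would do this by exhibiting an explicit isomorphism $\phi\colon L_{6,19}(-1)\to\mathfrak{n}_{4}(\k)$ with $\phi(X_{1})=E_{23}$, $\phi(X_{2})=E_{12}+E_{34}$, $\phi(X_{3})=-E_{12}+E_{34}$, forcing $\phi(X_{4})=-E_{13}+E_{24}$, $\phi(X_{5})=E_{13}+E_{24}$, $\phi(Z_{1})=2E_{14}$ (here the assumption $\text{char}(\k)\ne 2$ is needed so that $\phi(Z_{1})\ne 0$). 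Verifying the four defining brackets of $L_{6,19}(-1)$ together with the vanishing of $[\phi(X_{2}),\phi(X_{3})]$, $[\phi(X_{2}),\phi(X_{5})]$ and $[\phi(X_{3}),\phi(X_{4})]$ shows $\phi$ is a Lie algebra homomorphism, and inspection of components on the basis $\{E_{12},E_{13},E_{14},E_{23},E_{24},E_{34}\}$ of $\mathfrak{n}_{4}(\k)$ gives linear independence of the six images. Thus $\mathfrak{n}_{4}(\k)\cong L_{6,19}(-1)$. Combining this with the isomorphism criterion~(\ref{eq:delta}), the chain $L_{6,19}(\epsilon)\cong\mathfrak{n}_{4}(\k)\cong L_{6,19}(-1)$ forces $\epsilon=-\alpha^{2}$ for some $\alpha\in\k^{*}$, i.e.\ $\alpha^{2}+\epsilon=0$, contradicting the hypothesis of Case~2 and completing the proof.
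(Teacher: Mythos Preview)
Your proof is correct and follows essentially the same strategy as the paper: establish the upper bounds via the explicit $4\times4$ and $5\times5$ nilrepresentations, use the dimension count plus Engel's theorem to identify any $6$-dimensional nilpotent subalgebra of $\mathfrak{n}_4(\k)$ with $\mathfrak{n}_4(\k)$ itself, identify $\mathfrak{n}_4(\k)$ with $L_{6,19}(-1)$, and then invoke the isomorphism criterion~(\ref{eq:delta}) together with Corollary~\ref{coro:mumunil}. The only cosmetic differences are that the paper treats $\mu=\mu_{nil}$ at the end rather than the beginning, and obtains $L_{6,19}(-1)\cong\mathfrak{n}_4(\k)$ from the faithful $4$-dimensional nilrepresentation $\pi_2$ (Lemma~\ref{L6192}) via Engel plus the dimension count, whereas you write down an explicit Lie algebra isomorphism $\phi$; these are two phrasings of the same fact.
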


First, we prove some useful lemmas.

We define the linear mapping
$\pi_1: L_{6,19}(\epsilon) \rightarrow \mathfrak{gl}(5)$ given by
$$\pi_1\left(\sum_{i=1}^5 x_iX_i + z_1Z_1\right) = \left[
\begin{smallmatrix}
0 & x_1 & x_4 & x_5 & z_1 \\
0 & 0 & x_2 & x_3 & 0 \\
0 & 0 & 0 & 0 & -x_2 \\
0 & 0 & 0 & 0 & -\epsilon x_3 \\
0 & 0 & 0 & 0 & 0
\end{smallmatrix}
\right].$$

\begin{lemma}\label{L6191}
Let
$\epsilon \in \k$ then
$(\pi_1, \k^5)$ is a faithful nilrepresentation of
$L_{6,19}(\epsilon)$.
\end{lemma}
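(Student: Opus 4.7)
My proof plan has three ingredients: nilpotency of the image, faithfulness of $\pi_1$, and the homomorphism property.

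First, nilpotency is immediate: every matrix in the image of $\pi_1$ is strictly upper triangular (all diagonal entries are $0$), so every element of $\pi_1(L_{6,19}(\epsilon))$ is nilpotent. Thus, if $\pi_1$ is a representation at all, it is automatically a nilrepresentation.

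Next, faithfulness. Suppose $\pi_1\left(\sum x_i X_i + z_1 Z_1\right) = 0$. Reading off the entries of the matrix, the $(1,2)$ entry gives $x_1 = 0$, the $(2,3)$ entry gives $x_2 = 0$, the $(2,4)$ entry gives $x_3 = 0$, the $(1,3)$ entry gives $x_4 = 0$, the $(1,4)$ entry gives $x_5 = 0$, and the $(1,5)$ entry gives $z_1 = 0$. Hence $\ker \pi_1 = 0$. (Note that $x_2, x_3$ are detected without any condition on $\epsilon$, since they appear in positions $(2,3)$ and $(2,4)$.)

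Finally, the homomorphism property. Write $A_i = \pi_1(X_i)$ and $C = \pi_1(Z_1)$; in standard matrix-unit notation,
\[
A_1 = E_{12},\quad A_2 = E_{23} - E_{35},\quad A_3 = E_{24} - \epsilon E_{45},\quad A_4 = E_{13},\quad A_5 = E_{14},\quad C = E_{15}.
\]
Using $E_{ij} E_{k\ell} = \delta_{jk} E_{i\ell}$, one directly computes
\[
[A_1,A_2] = E_{13} = A_4,\quad [A_1,A_3] = E_{14} = A_5,\quad [A_2,A_4] = E_{15} = C,\quad [A_3,A_5] = \epsilon E_{15} = \epsilon C,
\]
and for every other pair $(i,j)$ (including $A_i$ with $C$) the bracket vanishes by inspection of the indices (e.g.\ $[A_2,A_3]$ vanishes because each of the four products $E_{23}E_{24}$, $E_{23}E_{45}$, $E_{35}E_{24}$, $E_{35}E_{45}$ is zero). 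These match precisely the structure relations in (\ref{eq:6}), so $\pi_1$ extends linearly to a Lie algebra homomorphism.

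The only potential obstacle is a clerical one, namely keeping track of the products of matrix units to confirm that the six ``extra'' brackets $[A_1,A_4]$, $[A_1,A_5]$, $[A_2,A_3]$, $[A_2,A_5]$, $[A_3,A_4]$, $[A_4,A_5]$ all vanish and that the term $-\epsilon E_{45}$ in $A_3$ interacts correctly with $A_2$ (it does: both $E_{35}E_{45}$ and $E_{45}E_{35}$ are zero, so the $\epsilon$ contribution in $[A_2,A_3]$ drops out). Once these elementary computations are done, the three ingredients above combine to give the lemma.
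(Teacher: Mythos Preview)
Your proof is correct and follows essentially the same approach as the paper: both verify the homomorphism property by direct computation (you on basis elements via matrix units, the paper on general elements $X,Y$), and both note that injectivity is immediate from reading off matrix entries. Your explicit check of nilpotency and of the vanishing of the ``extra'' brackets is a bit more detailed than the paper's write-up, but the underlying argument is the same.
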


\begin{proof}
We first to prove that
$\pi_1$ is a representation of
$L_{6,19}(\epsilon)$. Let
$B= \{X_1, X_2, X_3, X_4, X_5, Z_1\}$ be a basis such that verified the Equation (\ref{eq:6}). It follows that
$X = \sum_{i=1}^5 x_i X_i + z_1 Z_1, \;Y = \sum_{i=1}^5 y_i X_i + \widetilde{z}_1 Z_1 \in L_{6,19}(\epsilon)$ for all
$X, Y \in L_{6,19}(\epsilon)$.
By Equation (\ref{eq:6}), we have
\begin{equation}\label{eq:7}
[X, Y] = (x_1 y_2 - y_1 x_2) X_4 + (x_1 y_3 - y_3 x_1) X_5 + (x_2 y_4 - y_2 x_4) Z_1 + \epsilon (x_3 y_5 - y_5 x_3) Z_1.
\end{equation}
By the other hand,
$$\pi(X)= \left[
         \begin{smallmatrix}
         0 & x_1 & x_4 & x_5 & z_1 \\
         0 & 0 & x_2 & x_3 & 0 \\
         0 & 0 & 0 & 0 & -x_2 \\
         0 & 0 & 0 & 0 & -\epsilon x_3 \\
         0 & 0 & 0 & 0 & 0
\end{smallmatrix}
         \right] \text{ and }
\pi(Y)= \left[
         \begin{smallmatrix}
         0 & y_1 & y_4 & y_5 & \widetilde{z}_1 \\
         0 & 0 & y_2 & y_3 & 0 \\
         0 & 0 & 0 & 0 & -y_2 \\
         0 & 0 & 0 & 0 & -\epsilon y_3 \\
         0 & 0 & 0 & 0 & 0
\end{smallmatrix}
         \right].
$$
By straightforward calculation we have
$[\pi(X), \pi(Y)]$ coincides with the Equation (\ref{eq:7}). Hence
$(\pi_1, \k^5)$ is a representation of the Lie algebra
$L_{6,19}(\epsilon)$. It is clear that
$\pi_1$ is injective and it complete the proof.
\end{proof}

If
$\epsilon = -1$, we define the linear mapping
$\pi_2: L_{6,19}(-1) \rightarrow \mathfrak{gl}(5)$ given by
$$\pi_2\left(\sum_{i=1}^5 x_iX_i + z_1Z_1\right) =
\left[
\begin{smallmatrix}
0 & x_2 + x_3 & -x_4 - x_5 & 2 z_1 \\
0 &      0    &   x_1      & x_4 - x_5 \\
0 & 0 & 0 & x_2 - x_3 \\
0 & 0 & 0 & 0 \\
\end{smallmatrix}
\right]
$$

\begin{lemma}\label{L6192}
Let
$\k$ be a field of characteristic
$\neq 2$ then $(\pi_2, \k^4)$ is a faithful representation of
$L_{6,19}(-1)$.
\end{lemma}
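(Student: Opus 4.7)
The plan is to follow the template of Lemma \ref{L6191}: first verify that $\pi_2$ is a Lie algebra homomorphism by explicit matrix computation on generic elements $X = \sum_{i=1}^5 x_i X_i + z_1 Z_1$ and $Y = \sum_{i=1}^5 y_i X_i + \widetilde{z}_1 Z_1$, and then show injectivity. Using the defining brackets (\ref{eq:6}) with $\epsilon = -1$, the bracket in $L_{6,19}(-1)$ reads
$$[X,Y] = (x_1y_2 - y_1x_2)X_4 + (x_1y_3 - y_1x_3)X_5 + \bigl[(x_2y_4 - y_2x_4) - (x_3y_5 - y_3x_5)\bigr]Z_1,$$
so $\pi_2([X,Y])$ is strictly upper triangular with possibly nonzero entries only in positions $(1,3)$, $(1,4)$ and $(2,4)$.

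The main task is therefore to compute $[\pi_2(X),\pi_2(Y)] = \pi_2(X)\pi_2(Y) - \pi_2(Y)\pi_2(X)$ at those three positions and match $\pi_2([X,Y])$. Entries $(1,3)$ and $(2,4)$ are single-product comparisons: the $(2,4)$ slot is $a_{23}b_{34} - b_{23}a_{34} = x_1(y_2-y_3) - y_1(x_2-x_3)$, which equals $(x_1y_2 - y_1x_2) - (x_1y_3 - y_1x_3)$ as required; the $(1,3)$ slot reduces to $(x_2+x_3)y_1 - (y_2+y_3)x_1 = -[(x_1y_2-y_1x_2) + (x_1y_3-y_1x_3)]$, which matches as well. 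The only substantive computation is the $(1,4)$ entry
$$a_{12}b_{24} + a_{13}b_{34} - b_{12}a_{24} - b_{13}a_{34},$$
with $a_{12} = x_2+x_3$, $a_{13} = -(x_4+x_5)$, $a_{24} = x_4-x_5$, $a_{34} = x_2-x_3$. After expansion, the cross-terms involving $x_2y_5$, $x_3y_4$, $y_2x_5$ and $y_3x_4$ cancel in pairs, leaving $2\bigl[(x_2y_4 - y_2x_4) - (x_3y_5 - y_3x_5)\bigr]$, which agrees with $\pi_2([X,Y])_{14} = 2\,z_1([X,Y])$. This bookkeeping cancellation, which relies on the specific sign pattern $\pm x_4 \pm x_5$, $\pm x_2 \pm x_3$ and the factor $2$ in the upper right entry, is essentially the only nontrivial step, and is also where the value $\epsilon = -1$ enters through the sign of the $Z_1$ contribution.

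For faithfulness, suppose $\pi_2(X) = 0$. Reading off the six slots of the matrix yields the linear system $x_2+x_3 = 0$, $x_2-x_3 = 0$, $x_4+x_5 = 0$, $x_4-x_5 = 0$, $x_1 = 0$, $2z_1 = 0$. The hypothesis $\mathrm{char}(\k)\neq 2$ is precisely what is needed to conclude $x_2=x_3=x_4=x_5=0$ from the sum/difference pairs and $z_1=0$ from $2z_1=0$, so $X = 0$. Combined with the bracket verification above, this yields the claim; the main (and only) obstacle is carrying out the $(1,4)$-entry expansion carefully, rather than any conceptual difficulty.
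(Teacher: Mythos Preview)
Your proof is correct and follows precisely the approach the paper indicates: the paper's own proof is simply ``The proof is similar to the proof of Lemma~\ref{L6191}'', and you have carried out exactly that template in full detail. Your computation of the $(1,4)$ entry and your use of $\operatorname{char}(\k)\neq 2$ for injectivity are both accurate, so there is nothing to add.
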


\begin{proof}
The proof is similar to the proof of Lemma \ref{L6191}.
\end{proof}

\begin{proof}$[\text{Proof of Proposition \ref{LemaL19}.}]$
By Lemma \ref{L6191}, we have
$
\mu_{nil}(L_{6,19}(\epsilon)) \leq 5,
$
for every
$\epsilon \in \k$.

But
$\mu_{nil}(L_{6,19}(-1)) \leq 4$, which follows from Lemma \ref{L6192} and  by Equation (\ref{eq:cotainf}), we obtain
$$\mu_{nil}(L_{6,19}(-1)) = 4.$$
Hence, by Engel's Theorem
$L_{6,19}(-1) \cong \mathfrak{n}_4(\k)$ and from Equation (\ref{eq:delta}) we conclude that
$L_{6,19}(\epsilon) \cong \mathfrak{n}_4(\k)$ if and only if there exist
$\alpha \in \k^*$ such that
$\epsilon + \alpha^2 = 0$.
Therefore, if
$\epsilon + \alpha^2 \neq 0$ for all
$\alpha \in \k^*$, we get
$5 \leq \mu_{nil}(L_{6,19}(\epsilon))$.

Since
$\mathfrak{z}(L_{6,19}(\epsilon)) \subseteq [L_{6,19}(\epsilon), L_{6,19}(\epsilon)]$ for every
$\epsilon \in \k$ and by Corollary \ref{coro:mumunil} we have
$\mu(L_{6,19}(\epsilon)) = \mu_{nil}L_{6,19}(\epsilon)$. This complete the proof.
\end{proof}

From Equation (\ref{eq:cotainf}), we have
$\mu_{nil}(L_{6,j}) \geq 4$. Suppose
$\mu_{nil}(L_{6,j})= 4$, it is known that
$L_{6,j} \cong \mathfrak{n}_4(\k)$, by Engel's Theorem. Now we combine this and Proposition \ref{LemaL19} to obtain
$L_{6,j} \cong L_{6,19}(\epsilon)$ for some
$\epsilon  \in \k$. Then
$j= 19$, which follows from \cite{Gr}. Thus
\begin{equation}\label{remark:L19}
5 \leq \mu_{nil}(L_{6,j}).
\end{equation}
for all
$j \neq 19$.

%====================================================================================

\subsection{The lower bound of $\mu_{nil}$ for $L_{6,9} \text{ and } L_{6,24}(\epsilon)$ for all $\epsilon \in \k$}\label{sec:L6924}

%====================================================================================

The main object of this subsection is to proof the following theorems.

\begin{theorem}\label{L69}
Let
$n \in \mathbb{N}$ and let
$\g$ be a Lie subalgebra of
$\mathfrak{n}_n(\k)$ isomorphic to
$L_{6,9}$. Then
$n \geq 6$.
\end{theorem}

\begin{theorem}\label{L624}
Let
$\epsilon \in \k, n \in \mathbb{N}$ and let
$\g$ be a Lie subalgebra of
$\mathfrak{n}_n(\k)$ isomorphic to
$L_{6,24}(\epsilon)$. Then
\begin{enumerate}[(1)]
\item $n = 5$ if  there exist
      $a \in \k$ such that
      $a^2 - \epsilon = 0$ and ;
\item $n \geq 6$ in otherwise.
\end{enumerate}
\end{theorem}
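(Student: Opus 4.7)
The theorem has two parts. Step 1 is the general lower bound $n \geq 5$ (valid in both cases), via the same argument used for Lemma \ref{lema:59}: since $\dim[\g,[\g,\g]] = \dim\operatorname{span}(Z_1, Z_2) = 2$ must embed into $[\mathfrak{n}_n(\k), [\mathfrak{n}_n(\k), \mathfrak{n}_n(\k)]] = \mathfrak{n}_n^{(3)}$, whose dimension is $\binom{n-2}{2}$, one deduces $n \geq 5$.

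Step 2 handles the upper bound $n \leq 5$ for part (1): when $\epsilon = a^2$ for some $a \in \k$, the explicit $5 \times 5$ strictly upper triangular matrix representation given in Remark \ref{faithL624} provides a faithful embedding into $\mathfrak{n}_5(\k)$, and a direct matrix computation verifies the bracket relations of $L_{6,24}(\epsilon)$.

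Step 3 is the heart of the proof: to establish part (2), I would show that if $\phi \colon L_{6,24}(\epsilon) \hookrightarrow \mathfrak{n}_5(\k)$ is any embedding, then $\epsilon$ must be a square in $\k$. Setting $A_i = \phi(X_i)$ and $B_j = \phi(Z_j)$, the defining relations force $A_3 = [A_1, A_2] \in \mathfrak{n}_5^{(2)}$, $B_1, B_2 \in \mathfrak{n}_5^{(3)}$ linearly independent and central in $\phi(\g)$, together with $[A_1, A_3] = [A_2, A_4] = B_1$, $[A_1, A_4] = \epsilon[A_2, A_3] = \epsilon B_2$, and $[A_3, A_4] = 0$. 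Using the filtration $\mathfrak{n}_5 = \mathfrak{n}_5^{(1)} \supset \mathfrak{n}_5^{(2)} \supset \mathfrak{n}_5^{(3)} \supset \mathfrak{n}_5^{(4)}$, I would decompose each $A_i$ into its graded components and project the relations above onto each quotient $D_k = \mathfrak{n}_5^{(k)}/\mathfrak{n}_5^{(k+1)}$. The projections to $D_3 \cong \k^2$ yield a linear system relating the second-superdiagonal entries of $A_3$ (namely the three consecutive $2 \times 2$ minors $c_1, c_2, c_3$ of the first-superdiagonal parts of $A_1, A_2$) and those of $A_4$ (call them $p_1, p_2, p_3$), organised in the shape $\vec{y} = R\vec{x}$ with $R = \begin{pmatrix} 0 & \epsilon \\ 1 & 0 \end{pmatrix}$ and $R^2 = \epsilon I$. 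The scalar equation from the $(1,5)$-coefficient of $[A_3, A_4] = 0$ (namely $c_1 p_3 - c_3 p_1 = 0$) then couples these into a quadratic condition whose solvability in $\k$ forces $\epsilon = (t/s)^2$ for some nonzero scalars, i.e.\ $\epsilon$ is a square in $\k$.

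The main obstacle will be the case analysis for $\bar{A_4} := A_4 \bmod \mathfrak{n}_5^{(2)} \in D_1$, which must lie in the common graded centraliser of $\bar{A_1}$ and $\bar{A_2}$ (equivalently, certain consecutive $2 \times 2$ minors of the pair $(\bar{A_4}, \bar{A_i})$ must vanish for $i = 1, 2$). Depending on which of $c_1, c_2, c_3$ vanish, $\bar{A_4}$ is restricted to lie in subspaces of varying dimension, leading to several branches. In degenerate subcases (in particular when the $D_3$-projection of either $B_1$ or $B_2$ vanishes) one must descend to the $D_4$-coefficients to exclude configurations in which $B_1, B_2$ would become linearly dependent; in the remaining branches the same quadratic obstruction on $\epsilon$ emerges, ultimately forcing $\epsilon$ to be a square whenever an embedding exists.
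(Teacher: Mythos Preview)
Your proposal is correct and follows a route that differs in organization from the paper's. For Step~1 you invoke the dimension argument of Lemma~\ref{lema:59} directly; the paper instead uses Equation~(\ref{remark:L19}), which rests on the separate observation that $\mathfrak{n}_4(\k)\cong L_{6,19}(-1)$, so that $L_{6,24}(\epsilon)$ cannot embed in $\mathfrak{n}_4$. Step~2 is the same in both. For Step~3 the paper does not work with the graded filtration of $\mathfrak{n}_5$: it first isolates the $L_{5,9}$-subalgebra $\h=\operatorname{span}\{X_1,X_2,X_3,Z_1,Z_2\}$ and classifies the possible positions of $\mathfrak{z}(\h)$ inside $\operatorname{span}\{E_{14},E_{25},E_{15}\}$ (Lemma~\ref{lema:1}); then, for each of the resulting cases (including the split $c\neq 0$ versus $c=0$), it writes $X_1,X_2,X_4$ explicitly and solves the bracket relations coordinate by coordinate (Lemmas~\ref{lema:2}, \ref{lema:5} and Proposition~\ref{lema:4}), obtaining in the principal case the relation $\epsilon = x_{34}^2/y_{34}^2$. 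Your graded approach packages the same data more conceptually, but the ``quadratic obstruction'' you anticipate unwinds into essentially the same case split: your degenerate subcases correspond to the paper's center-location cases, and your condition on $\bar{A_4}$ reproduces Lemma~\ref{lema:5}. One caution: the $(1,5)$-entry of $[A_3,A_4]$ is not simply $c_1p_3-c_3p_1$ unless the $D_1$-component of $A_4$ has already been controlled, so your case analysis on $\bar{A_4}$ must precede that step rather than follow it. Neither approach avoids the casework; the paper's is more explicit and computational, yours more structural.
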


By Theorems \ref{L69} and \ref{L624}, we easily obtain the following Corollaries, respectively.

\begin{corollary}\label{coro:L69}
$\mu_{nil}(L_{6,9})\geq 6$
\end{corollary}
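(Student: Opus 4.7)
The plan is to reduce the corollary to the embedding statement of Theorem \ref{L69}, which has just been stated. Let $n = \mu_{nil}(L_{6,9})$ and fix a faithful nilrepresentation $(\pi_{nil}, V)$ with $\dim V = n$. By definition of a nilrepresentation, every operator $\pi_{nil}(X)$ is nilpotent on $V$, so Engel's theorem supplies a basis $B$ of $V$ with respect to which $[\pi_{nil}(X)]_B \in \mathfrak{n}_n(\k)$ for every $X \in L_{6,9}$.

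With this choice of basis, $\pi_{nil}(L_{6,9})$ is a Lie subalgebra of $\mathfrak{n}_n(\k)$, and since $\pi_{nil}$ is faithful it is isomorphic to $L_{6,9}$. Theorem \ref{L69} then gives $n \geq 6$, which is exactly the claim $\mu_{nil}(L_{6,9}) \geq 6$. Note that this step is essentially a translation: all of the combinatorial/structural work is already packaged inside Theorem \ref{L69}, so nothing substantive remains to be done here.

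Combined with the explicit $6$-dimensional faithful nilrepresentation already exhibited in Table \ref{Tabla:7}, the two bounds match and in fact yield the equality $\mu_{nil}(L_{6,9}) = 6$. The only place where a genuine obstacle could arise is Theorem \ref{L69} itself; the passage from Theorem \ref{L69} to Corollary \ref{coro:L69} is routine via Engel's theorem and the definition of $\mu_{nil}$.
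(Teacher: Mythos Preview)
Your argument is correct and matches the paper's approach: the paper simply states that Corollary~\ref{coro:L69} follows easily from Theorem~\ref{L69}, and your use of Engel's theorem to realize a faithful nilrepresentation as a subalgebra of $\mathfrak{n}_n(\k)$ is exactly the implicit step needed. The additional remark about equality via Table~\ref{Tabla:7} is also in line with the paper.
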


\begin{corollary}\label{cro:l624}
Let
$\epsilon \in \k$ then
$$\mu_{nil}(L_{6,24}(\epsilon))\geq
\begin{cases}
5, \text{ } \exists a \in \k : a^2 - \epsilon = 0;\\
6, \text{ }  \text{ in otherwise }.
\end{cases}
$$
\end{corollary}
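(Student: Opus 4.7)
The plan is a direct deduction from Theorem \ref{L624}, bridged to the world of representations by Engel's theorem. Let $(\pi,V)$ be any faithful nilrepresentation of $L_{6,24}(\epsilon)$ and set $n=\dim V$. By definition, every operator $\pi(X)$ is nilpotent, so $\pi(L_{6,24}(\epsilon))$ is a Lie subalgebra of $\mathfrak{gl}(V)$ all of whose elements act nilpotently on $V$. Engel's theorem then produces a basis $B$ of $V$ for which $[\pi(X)]_B\in\mathfrak{n}_n(\k)$ for every $X\in L_{6,24}(\epsilon)$. Faithfulness of $\pi$ means the induced map is an injective Lie algebra morphism, so $L_{6,24}(\epsilon)$ is isomorphic to a Lie subalgebra of $\mathfrak{n}_n(\k)$.

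At this point Theorem \ref{L624} applies directly to that subalgebra. In the first case, when some $a\in\k$ satisfies $a^2-\epsilon=0$, the theorem gives $n\geq 5$; in the second case, when no such $a$ exists, it gives $n\geq 6$. Taking the infimum over all faithful nilrepresentations $(\pi,V)$ of $L_{6,24}(\epsilon)$ translates these inequalities into the asserted lower bounds
\[
\mu_{nil}(L_{6,24}(\epsilon))\geq 5 \quad\text{or}\quad \mu_{nil}(L_{6,24}(\epsilon))\geq 6,
\]
respectively, which is exactly the content of the corollary.

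There is no substantive obstacle in this step itself: the corollary is essentially a repackaging of Theorem \ref{L624}, the only ingredient being the standard Engel-theorem passage from nilpotent Lie algebras of nilpotent operators to subalgebras of strictly upper-triangular matrices. The genuine difficulty sits inside Theorem \ref{L624}, namely the task of ruling out embeddings $L_{6,24}(\epsilon)\hookrightarrow\mathfrak{n}_5(\k)$ whenever $\epsilon$ is not a square in $\k$. That step, deferred to the proof of the theorem, is expected to proceed by writing generic strictly upper-triangular $5\times 5$ matrices for the images of $X_1,\dots,X_4$ and extracting from the defining relations of $L_{6,24}(\epsilon)$ a polynomial identity whose solvability over $\k$ forces $\epsilon$ to lie in $(\k)^2$, the square-root condition then appearing as the precise field-theoretic obstruction.
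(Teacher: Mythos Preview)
Your argument is correct and matches the paper's approach: the paper simply states that the corollary follows easily from Theorem~\ref{L624}, and your use of Engel's theorem to pass from a faithful nilrepresentation to a Lie subalgebra of $\mathfrak{n}_n(\k)$ is exactly the implicit bridge being used. Your closing remarks about where the real work lies (inside Theorem~\ref{L624}, via Proposition~\ref{lema:4}) are also accurate.
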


Let
$B = \{X_1, X_2, X_3, X_4, Z_1, Z_2\}$ be a basis of
$L_{6,24}(\epsilon)$ such that the only non-zero brackets are
$$
[X_1, X_2] = X_3, [X_1, X_3] = Z_1, [X_2, X_3] = Z_2, [X_1, X_4] = \epsilon Z_2, [X_2, X_4] = Z_1.
$$
(see \S \ref{sec:dimen6}).

\begin{remark}\label{remark:L59}
The Lie algebra
$\h$ generated by the set
$\{X_1, X_2, X_3, Z_1, Z_2\}$ verified
\begin{equation}\label{eq:8}
[X_1, X_2] = X_3, [X_1, X_3] = Z_1, [X_2, X_3] = Z_2, [X_i, Z_j] = 0
\end{equation}
for
$i=1,2,3; j=1,2$ is a Lie algebra isomorphic to
$L_{5,9}$.
\end{remark}

In order to prove Theorems \ref{L69} and \ref{L624} we need the following results, respectively.

\begin{proposition}\label{lema:3}
Let
$\{X_1, X_2, X_3, Z_1, Z_2\} \subseteq \mathfrak{n}_5(\k)$ be a set linearly independent such that verified Equation
(\ref{eq:8}). Let
$X_4 \in \mathfrak{n}_5(\k)$ such that
$[X_i, X_4]= 0$ for
$i=1, 2, 3$ then
$X_4 \in \k\{X_1, X_2, X_3, Z_1, Z_2\}$.
\end{proposition}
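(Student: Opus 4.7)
The algebra $\h := \k\{X_1, X_2, X_3, Z_1, Z_2\} \subseteq \mathfrak{n}_5(\k)$ is isomorphic to $L_{5,9}$ and is generated as a Lie algebra by $X_1$ and $X_2$, since $X_3 = [X_1, X_2]$, $Z_1 = [X_1, X_3]$, $Z_2 = [X_2, X_3]$. Hence an element $X_4 \in \mathfrak{n}_5(\k)$ with $[X_i, X_4] = 0$ for $i = 1, 2, 3$ centralizes all of $\h$, and the proposition reduces to showing $C := C_{\mathfrak{n}_5(\k)}(\h) \subseteq \h$.

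I would first establish that $V := \k^5$ is an \emph{indecomposable} $\h$-module under the inclusion $\h \hookrightarrow \mathfrak{n}_5(\k)$. A nontrivial decomposition $V = V_1 \oplus V_2$ would produce ideals $I_i := \{X \in \h : X V_i = 0\}$ with $I_1 \cap I_2 = 0$. Since every nonzero ideal of $L_{5,9}$ meets the center $\mathfrak{z}(\h) = \k\{Z_1, Z_2\}$ nontrivially, the only nontrivial possibility is that $I_1$ and $I_2$ are two distinct lines in $\mathfrak{z}(\h)$; but then each quotient $\h/I_i$ is isomorphic to the filiform Lie algebra $L_{4,3}$, so $\dim V_i \geq \mu_{nil}(L_{4,3}) = 4$ by Proposition~\ref{munilabeliana}(\ref{filiform}), forcing $\dim V \geq 8$ and contradicting $\dim V = \mu_{nil}(L_{5,9}) = 5$ (Lemma~\ref{lema:59}). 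By Fitting's lemma, $\End_\h(V)$ is then local, $\End_\h(V) = \k \cdot \id \oplus N$, with $N$ the ideal of its nilpotent elements; since every element of $\mathfrak{n}_5(\k)$ is nilpotent one has $C = N$, and the linear independence of $Z_1, Z_2 \in N$ gives $\dim N \geq 2$.

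The final step is the bound $\dim N \leq 2$, which then forces $N = \k\{Z_1, Z_2\} \subseteq \h$ and proves the claim. I would obtain it by producing a cyclic vector $v_0 \in V$ for the $\h$-action (in the standard realization of Table~\ref{Tabla:6} the last basis vector of $\k^5$ is cyclic; the same holds in general by indecomposability combined with $\dim V = \dim \h$): any $\phi \in \End_\h(V)$ is determined by $\phi(v_0)$, and the equivariance conditions $\phi(X v_0) = X \phi(v_0)$ for $X \in \h$ force $\phi(v_0)$ to lie in a $3$-dimensional subspace of $V$. A more pedestrian alternative is to conjugate $(X_1, X_2)$ by an upper-triangular element into the standard form of Table~\ref{Tabla:6} and compute $C_{\mathfrak{n}_5(\k)}(X_1) \cap C_{\mathfrak{n}_5(\k)}(X_2)$ directly as a linear system, verifying it equals $\k\{Z_1, Z_2\}$. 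The principal obstacle is this last dimension estimate on $\End_\h(V)$, which rests on the structure of $V$ as an indecomposable $5$-dimensional faithful module over $L_{5,9}$.
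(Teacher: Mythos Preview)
Your reduction to showing $C_{\mathfrak{n}_5(\k)}(\h)\subseteq\h$ is the right starting point, and the indecomposability argument is essentially sound (though ``$I_i$ is a line in $\mathfrak{z}(\h)$'' overstates what you prove; you only get $I_i\cap\mathfrak{z}(\h)$ is a line, and must still rule out the remaining dimension splits $(1,4),(2,3),\dots$, which is easy since in each case one $I_i$ is forced to contain all of $\mathfrak{z}(\h)$). The equality $C=N$ is also too strong---only $C\subseteq N$ follows from nilpotence---but fortunately only that inclusion is needed.

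The genuine gap is the cyclic-vector step. Your justification ``by indecomposability combined with $\dim V=\dim\h$'' is not a theorem, and in fact the claim is \emph{false}: not every faithful $5$-dimensional nilrepresentation of $L_{5,9}$ is cyclic. Take $X_1=E_{12}+E_{34}$ and $X_2=E_{23}+E_{35}$ in $\mathfrak{n}_5(\k)$; then $X_3=E_{13}-E_{24}$, $Z_1=-2E_{14}$, $Z_2=-E_{15}$, so $\h\cong L_{5,9}$ and $\mathfrak{z}(\h)=\operatorname{span}_\k\{E_{14},E_{15}\}$. Since $E_{14}$ is central, every $X\in\h$ has $x_{45}=0$, hence $\h V\subseteq\k\{e_1,e_2,e_3\}$ and $\dim V/\h V=2$: no cyclic vector exists. (Here $\dim V^{\h}=1$, so this realization is ``dual'' to the standard one rather than isomorphic to it.) Consequently your bound $\dim\End_\h(V)\le 3$ cannot be obtained by the cyclic-vector trick alone, and even in the cyclic case the assertion that $\phi(v_0)$ is confined to a $3$-dimensional subspace is stated without proof.

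The paper's proof is precisely your ``pedestrian alternative'', carried out in full. It first pins down the possible positions of $\mathfrak{z}(\h)$ inside $\operatorname{span}_\k\{E_{14},E_{25},E_{15}\}$ (Lemma~\ref{lema:1}), then in each case extracts nonvanishing conditions on the entries of $X_1,X_2$ forced by the linear independence of $Z_1,Z_2$ (Lemma~\ref{lema:2}), next shows that any $X_4$ with $[X_3,X_4]=0$ and $[X_i,X_4]\in\mathfrak{z}(\h)$ already has most entries zero (Lemma~\ref{lema:5}), and finally uses $[X_1,X_4]=[X_2,X_4]=0$ to conclude $X_4\in\operatorname{span}_\k\{Z_1,Z_2\}$. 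Your conceptual route would be attractive if the module-theoretic input could be supplied cleanly, but as written the hard step (bounding $\dim\End_\h(V)$) is exactly where the paper's explicit case analysis does the work.
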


\begin{proposition}\label{lema:4}
Let
$\epsilon \in \k$ and let
$\{X_1, X_2, X_3, Z_1, Z_2\} \subseteq \mathfrak{n}_5(\k)$ be a set linearly independent such that verified Equation
(\ref{eq:8}). Let
$X_4 \in \mathfrak{n}_5(\k)$ such that
\begin{enumerate}[(1)]
\item $\{X_1, X_2, X_3, X_4, Z_1, Z_2\}$ is a linearly independent and
\item $[X_1, X_4] = \epsilon Z_2 \;\; [X_2, X_4] = Z_1 \;\; [X_3, X_4] = [Z_j, X_4] =0$ for
      $j=1, 2$.
\end{enumerate}
Then there exists
$a \in \k$ such that
$a^2 - \epsilon = 0$.
\end{proposition}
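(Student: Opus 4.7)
The plan is to leverage the filtration of $\mathfrak{n}_5(\k)$ by its lower central series together with the centralizer information from Proposition \ref{lema:3}. Write $\h = \k\{X_1, X_2, X_3, Z_1, Z_2\}$, the Lie subalgebra of $\mathfrak{n}_5(\k)$ isomorphic to $L_{5,9}$ generated by the hypotheses. The lower central series $\mathfrak{n}_5(\k) \supset \mathfrak{n}_5^2 \supset \mathfrak{n}_5^3 \supset \mathfrak{n}_5^4 \supset 0$ has successive dimensions $10, 6, 3, 1$, and $X_3 \in \mathfrak{n}_5^2$, $Z_1, Z_2 \in \mathfrak{n}_5^3$.

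First I would apply Proposition \ref{lema:3} to the generator $E_{15}$ of the center $\mathfrak{n}_5^4$ of $\mathfrak{n}_5(\k)$: since $E_{15}$ commutes with $X_1, X_2, X_3$, that proposition gives $E_{15} \in \h$. Being central in $\h$, it must lie in $\mathfrak{z}(\h) = \k\{Z_1, Z_2\}$. Writing $E_{15} = \mu_1 Z_1 + \mu_2 Z_2$ and reducing modulo $\mathfrak{n}_5^4$ forces the classes $\overline{Z}_1, \overline{Z}_2$ in the $2$-dimensional quotient $\mathfrak{n}_5^3/\mathfrak{n}_5^4$ to be linearly dependent. Second, using the isomorphism $L_{6,24}(\epsilon) \cong L_{6,24}(\alpha^2\epsilon)$ (for $\alpha \in \k^*$) and the fact that the property ``$\epsilon$ is a square'' is preserved under such rescalings, one can replace $X_2$ by a suitable $X_2 - c X_1$ to normalize and assume $\overline{Z}_2 = 0$, that is, $Z_2 \in \k E_{15}$.

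With this normalization in place, I would then use inner automorphisms of $\mathfrak{n}_5(\k)$ (conjugation by elements of the unipotent upper-triangular subgroup of $GL_5(\k)$, which preserve $\mathfrak{n}_5(\k)$) to move $\overline{X}_1, \overline{X}_2 \in \mathfrak{n}_5^1/\mathfrak{n}_5^2 \cong \k^4$ into a standard form, for instance the form corresponding to the explicit embedding of $L_{5,9}$ used in Table \ref{Tabla:6}. Write $X_4 = \sum_{i<j} c_{ij} E_{ij}$; the conditions $[Z_j, X_4] = 0$, $[X_3, X_4] = 0$, $[X_1, X_4] = \epsilon Z_2$, and $[X_2, X_4] = Z_1$ become a system of linear equations in the coefficients $c_{ij}$. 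Proposition \ref{lema:3} is again used to eliminate the freedom of modifying $X_4$ by elements of $\h$ (which give rise to trivial perturbations of the bracket relations).

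The decisive step is the resulting compatibility condition. The plan is to show that, after all the linear constraints have been imposed, what survives is one polynomial equation in the free coefficient of $X_4$ and $\epsilon$, of the form $a^2 = \epsilon$ for some $a \in \k$; hence such an $X_4$ exists if and only if $\epsilon$ is a square in $\k$. The hard part will be Step 5: carrying out the bracket computations explicitly in the chosen normalization and extracting this quadratic relation transparently. Since embeddings of $L_{5,9}$ into $\mathfrak{n}_5(\k)$ are not unique even up to conjugation by the unipotent group, care is required to verify that the quadratic equation so obtained really does govern every admissible embedding, not just the chosen standard one.
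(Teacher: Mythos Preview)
Your proposal is an outline rather than a proof, and it contains a genuine gap at the normalization step. You claim that replacing $X_2$ by $X_2 - cX_1$ (together with the isomorphism $L_{6,24}(\epsilon)\cong L_{6,24}(\alpha^2\epsilon)$) lets you assume $Z_2\in\k E_{15}$. But this substitution does \emph{not} preserve the $L_{6,24}(\epsilon)$ relations involving $X_4$: with $X_2'=X_2-cX_1$ and hence $Z_2'=Z_2-cZ_1$, one gets
\[
[X_1,X_4]=\epsilon Z_2=\epsilon cZ_1'+\epsilon Z_2',\qquad
[X_2',X_4]=Z_1-c\epsilon Z_2=(1-c^2\epsilon)Z_1'-c\epsilon Z_2',
\]
which is no longer of the form $[X_1,X_4]=\epsilon' Z_2'$, $[X_2',X_4]=Z_1'$ for any $\epsilon'$. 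So you cannot simply normalize $Z_2$ this way without destroying the very relations you want to analyze. You would need to describe explicitly the full automorphism (acting on $X_1,X_2,X_4$ simultaneously) that both normalizes the center and leaves you with an $L_{6,24}(\epsilon')$ structure with $\epsilon'/\epsilon$ a square; you have not done this, and it is not obvious. The second normalization---conjugating $X_1,X_2$ to a standard form---you yourself flag as delicate because embeddings of $L_{5,9}$ into $\mathfrak n_5(\k)$ are not all conjugate; this worry is justified, and the proposal gives no argument that the final quadratic relation is independent of the orbit.

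The paper avoids both of these normalization issues by working directly with generic matrix entries. Via Lemma~\ref{lema:1} the center $\mathfrak z(\h)$ is shown to be one of $\operatorname{span}_\k\{E_{14}+cE_{25},E_{15}\}$ or $\operatorname{span}_\k\{E_{25},E_{15}\}$; Lemma~\ref{lema:2} then records the nondegeneracy conditions on the entries of $X_1=[x_{ij}]$, $X_2=[y_{ij}]$ in each case, and Lemma~\ref{lema:5} uses $[X_3,X_4]=0$ to kill most entries of $X_4$. With these in hand, the bracket equations $[X_1,X_4]=\epsilon Z_2$ and $[X_2,X_4]=Z_1$ are written out entrywise and solved: for instance in the case $c\neq 0$ one finds directly $\epsilon=x_{34}^2/y_{34}^2$, so $\epsilon$ is a square. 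The remaining cases are handled analogously. No conjugation or basis normalization is attempted; the argument is a case split followed by explicit matrix computation, and the quadratic relation falls out of the entry comparison rather than being anticipated in advance.
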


\begin{proof}$[\text{Proof of Theorem \ref{L69}.}]$
Since
$\g$ is a Lie algebra isomorphic to
$L_{6,9}$, we have
\begin{enumerate}[(a)]
\item $n \geq 5$, by Equation (\ref{remark:L19}) and
\item there exists
$\{X_1, X_2, X_3, X_4, Z_1, Z_2\}$ a basis of
$\g$ that verified equation (\ref{eq:8}) and
$[X_i, X_4]= [Z_j, X_4]= 0$ for
$i= 1, 2,3; j= 1, 2$.
\end{enumerate}
By Proposition \ref{lema:3}, we obtain
$n \geq 6$.
\end{proof}

\begin{proof}$[\text{Proof of Theorem \ref{L624}.}]$
Since
$\g$ is a Lie algebra isomorphic to
$L_{6,24}(\epsilon)$, we have
\begin{enumerate}[(a)]
\item $n \geq 5$, by equation \ref{remark:L19} and
\item there exists
      $\{X_1, X_2, X_3, X_4, Z_1, Z_2\}$ a basis of
      $\g$ that verified equation (\ref{eq:8}) and
      $[X_1, X_4] = \epsilon Z_2 \;\; [X_2, X_4] = Z_1 \;\; [X_4, Z_j]= 0$ for
      $j=1, 2$.
\end{enumerate}
If
$n = 5$, by Proposition \ref{lema:4}, there exist
$a \in \k$ such that
$a^2 - \epsilon = 0$. It complete the proof.
\end{proof}

The remaining of this section is devoted to prove the Propositions \ref{lema:3} and \ref{lema:4}.

Since Lie algebra
$L_{6,9} = L_{5,9} \oplus L_{1,1}$ (see \S \ref{sec:dimen6}), we get
$L_{5,9}$ is a Lie subalgebra of
$L_{6,9}$ of codimension
$1$, as in the case
$L_{6,24}(\epsilon)$ for every
$\epsilon \in \k$. Then, if
$(\pi, V)$ is a faithful representation of
$L_{6,9}$, or
$L_{6,24}(\epsilon)$, we have
$(\pi\mid_{L_{5,9}}, V)$ is a faithful representation of
$L_{5,9}$. Hence, in order to prove Propositions \ref{lema:3} and \ref{lema:4}, we need
the following results that gives some precise information about the structure of a
faithful nilrepresentation
$(\pi, \k^5)$ of
$L_{5,9}, L_{6,9} \text{ and } L_{6,24}(\epsilon)$.

\begin{lemma}\label{lemabase}
Let
$V$ be a vector space over
$\k$ of dimension 3 and let
$W$ be a subspace of
$V$ of dimension
$2$. If
$B=\{v_1, v_2, v_3\}$ is a basis of
$V$ then
$W$ has a basis of any of the following ways:
\begin{enumerate}[a)]
\item $\{v_1 + a v_3, v_2 + b v_3\}$ with
      $a, b \in \k$;
\item $\{v_1 + c v_2, v_3\}$ with
      $c \in \k$;
\item $\{v_2, v_3\}$.
\end{enumerate}
\end{lemma}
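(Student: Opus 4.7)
My plan is to classify the possible $W$ by intersecting with the coordinate line $\langle v_3 \rangle$ and, when needed, by examining the image in the quotient $V/\langle v_3\rangle$. This is essentially a coordinate-free rephrasing of putting a $2 \times 3$ coordinate matrix in reduced row echelon form, where the three cases (a), (b), (c) correspond precisely to the three possible pivot column patterns $\{1,2\}$, $\{1,3\}$, $\{2,3\}$.

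First I would split on $\dim (W \cap \langle v_3 \rangle)$, which is $0$ or $1$ because $\dim W = 2$ and $\dim \langle v_3\rangle = 1$. If this intersection is $0$, then the restriction of the projection $V \twoheadrightarrow V/\langle v_3\rangle \cong \langle v_1, v_2\rangle$ to $W$ is injective, hence (by dimension count) an isomorphism. Pulling back the basis $\{\bar v_1, \bar v_2\}$ yields unique elements of $W$ of the form $v_1 + a v_3$ and $v_2 + b v_3$, producing case (a).

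If $\dim (W \cap \langle v_3\rangle) = 1$, then $v_3 \in W$, and the image $\overline{W}$ of $W$ in $V/\langle v_3\rangle$ is one-dimensional, sitting inside $\langle \bar v_1, \bar v_2\rangle$. Either $\overline{W}$ is spanned by some $\bar v_1 + c\bar v_2$ (which occurs iff $\overline{W} \neq \langle \bar v_2 \rangle$), or $\overline{W} = \langle \bar v_2\rangle$. In the first situation lift $\bar v_1 + c\bar v_2$ to an element $v_1 + c v_2 + d v_3 \in W$ and subtract the available $d v_3 \in W$, yielding basis $\{v_1 + c v_2, v_3\}$, which is case (b). In the second situation any lift $v_2 + \alpha v_3$ lies in $W$, and subtracting $\alpha v_3$ gives $v_2 \in W$, producing basis $\{v_2, v_3\}$, case (c).

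No real obstacle is expected; the only thing to be mindful of is that cases (a)–(c) together exhaust all subspaces, and in particular that the separation between (b) and (c) in the second branch is handled by whether the image in the quotient has a nonzero $\bar v_1$-component. Uniqueness of the scalars $a,b,c$ is automatic from the quotient construction but is not claimed by the statement, so I would not spend time on it.
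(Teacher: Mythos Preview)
Your argument is correct and complete; the split on $\dim(W\cap\langle v_3\rangle)$ together with the quotient $V/\langle v_3\rangle$ cleanly produces exactly the three cases. The paper itself simply declares the result to be ``a straightforward calculation of linear algebra'' and gives no details, so your proof is in fact more explicit than what appears there.
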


\begin{proof}
The proof is an straightforward calculation of the linear algebra.
\end{proof}

Let
$\{ E_{ij}\}$ be the canonical basis of
$\mathfrak{gl}(\k^n)$.

\begin{lemma}\label{lema:1}
Let
$\h$ be a Lie subalgebra of
$\mathfrak{n}_5(\k)$ isomorphic to
$L_{5,9}$. Then the center
$\mathfrak{z}(\h)$ is of any of the following ways
\begin{enumerate}[(1)]
\item $\mathfrak{z}(\h)= \operatorname{span}_{\k} \{E_{14} + cE_{25}, E_{15}\}$ with
      $c \in \k$;
\item $\mathfrak{z}(\h)= \operatorname{span}_{\k} \{E_{25}, E_{15}\}$.
\end{enumerate}
\end{lemma}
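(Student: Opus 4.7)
The plan is to locate $\mathfrak{z}(\h)$ inside a specific $3$-dimensional subspace of $\mathfrak{n}_5(\k)$, apply Lemma \ref{lemabase}, and rule out the one remaining ``generic'' case. Since $\h \cong L_{5,9}$, the center $\mathfrak{z}(\h)$ coincides with the third lower-central term $[\h,[\h,\h]]$, which is contained in $[\mathfrak{n}_5(\k),[\mathfrak{n}_5(\k),\mathfrak{n}_5(\k)]]$. Introducing the grading $\deg E_{ij}:=j-i$ on $\mathfrak{n}_5(\k)$, this last triple bracket equals $\operatorname{span}_\k\{E_{14},E_{25},E_{15}\}$. Lemma \ref{lemabase}, applied with $v_1=E_{14}$, $v_2=E_{25}$, $v_3=E_{15}$, leaves three possibilities for $\mathfrak{z}(\h)$: the cases (1) and (2) of the statement, together with the remaining case
\[
(\ast)\qquad \mathfrak{z}(\h) = \operatorname{span}_\k\{E_{14}+aE_{15},\; E_{25}+bE_{15}\}, \qquad a,b\in\k,
\]
which I would then show cannot occur.

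To rule out $(\ast)$, pick $X_1,X_2\in\h$ corresponding to the standard generators of $L_{5,9}$ and decompose their degree-$1$ homogeneous parts as $X_i^{(1)}=\sum_{k=1}^{4}\alpha_k^{(i)}E_{k,k+1}$. In $\mathfrak{n}_5(\k)$ the only non-zero brackets of an $E_{k,k+1}$ against $E_{14}$ or $E_{25}$ are $[E_{12},E_{25}]=E_{15}$ and $[E_{45},E_{14}]=-E_{15}$, and the higher-degree components of $X_i$ bracket any element of $\operatorname{span}\{E_{14},E_{25}\}$ into degree $\geq 5 = 0$. Hence the centrality identities $[X_i,E_{14}+aE_{15}] = -\alpha_4^{(i)}E_{15}=0$ and $[X_i,E_{25}+bE_{15}] = \alpha_1^{(i)}E_{15}=0$ force $\alpha_1^{(i)}=\alpha_4^{(i)}=0$ for $i=1,2$. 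Thus $X_1^{(1)},X_2^{(1)}\in\operatorname{span}\{E_{23},E_{34}\}$.

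Taking the degree-$2$ component of $[X_1,X_2]$ gives $[X_1^{(1)},X_2^{(1)}]=c\,E_{24}$ with $c=\alpha_2^{(1)}\alpha_3^{(2)}-\alpha_3^{(1)}\alpha_2^{(2)}$; then the degree-$3$ component of $Z_1=[X_1,[X_1,X_2]]$ equals $c\,[X_1^{(1)},E_{24}]$, and this vanishes because $[E_{23},E_{24}]=[E_{34},E_{24}]=0$. The same computation gives the vanishing of the degree-$3$ component of $Z_2$, so both $Z_1,Z_2$ lie in $\operatorname{span}\{E_{15}\}$, contradicting $\dim \mathfrak{z}(\h)=2$ and eliminating $(\ast)$. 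The main obstacle is the careful interplay between the $\Z$-grading on $\mathfrak{n}_5(\k)$ and the centrality relations in $\h$; once these are aligned, the argument collapses because the only degree-$1$ matrix units producing $E_{15}$ against $E_{14}$ or $E_{25}$ are the extremal $E_{12}$ and $E_{45}$, and these are exactly the ones that centrality in case $(\ast)$ kills.
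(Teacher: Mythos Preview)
Your proof is correct and follows essentially the same route as the paper: locate $\mathfrak{z}(\h)$ inside $\operatorname{span}_\k\{E_{14},E_{25},E_{15}\}$ via the lower central series, invoke Lemma~\ref{lemabase}, and eliminate the remaining case~$(\ast)$ by showing that centrality of $E_{14}+aE_{15}$ and $E_{25}+bE_{15}$ kills the $(1,2)$ and $(4,5)$ entries, after which the algebra cannot be $3$-step nilpotent. The only difference is packaging: the paper applies the centralizer condition to \emph{all} of $\h$ at once, obtaining $\h\subseteq\{M\in\mathfrak{n}_5(\k):m_{12}=m_{45}=0\}$, and then observes directly that this ambient algebra is $2$-step nilpotent; you instead use the $\Z$-grading and track degree components of the generators $X_1,X_2$ and their iterated brackets. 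Both arguments arrive at the same contradiction, and neither has any real advantage over the other.
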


\begin{proof}
Since
$\h$ is a Lie subalgebra of
$\mathfrak{n}_5(\k)$ isomorphic to
$L_{5,9}$, we have
$\mathfrak{z}(\h) = [\h, [\h, \h]]$ and
$\dim \mathfrak{z}(\h) = 2$. Hence
$\mathfrak{z}(\h) \subseteq \operatorname{span}_{\k} \{E_{14}, E_{25}, E_{15}\}$. By Lemma \ref{lemabase} a basis of
$\mathfrak{z}(\h)$ is of any of the following ways:
\begin{enumerate}[-]
\item $\{E_{14} + a E_{15}, E_{25} + b E_{15}\}$ with
      $a, b \in \k$;
\item $\{E_{14} + c E_{25}, E_{15}\}$ with
      $c \in \k$;
\item $\{E_{25}, E_{15}\}$.
\end{enumerate}
Suppose that the center is
$\mathfrak{z}(\h) = \operatorname{span}_{\k} \{E_{14} + a E_{15}, E_{25} + b E_{15}\}$. It follows that
$$\h \subseteq \left\{ \left[
      \begin{smallmatrix}
      0 & 0 & x_{13} & x_{14} & x_{15} \\
      0 & 0 & x_{23} & x_{24} & x_{25} \\
      0 & 0 & 0 & x_{34} & x_{35} \\
      0 & 0 & 0 & 0 & 0\\
      0 & 0 & 0 & 0 & 0
      \end{smallmatrix}
      \right]: x_{ij} \in \k \right\}.$$
Therefore
$\h$ is
$k$-step nilpotent Lie algebra with
$k \leq 2$, which contradicts that
$\h$ is isomorphic to
$L_{5,9}$. The proof is now completed.
\end{proof}

\begin{lemma}\label{lema:2}
Let
$\h$ be a Lie subalgebra of
$\mathfrak{n}_5(\k)$ isomorphic to
$L_{5,9}$. Let
$X_1= [x_{ij}], X_2= [y_{ij}] \in \mathfrak{n}_5(\k)$
such that the set
$$\{X_1, X_2, X_3, Z_1, Z_2\} \subseteq \mathfrak{n}_5(\k)$$
is a basis of
$\h$ that verified Equation (\ref{eq:8}).
\begin{enumerate}[(1)]
\item If
      $\mathfrak{z}(\h) = \operatorname{span}_{\k} \{E_{14} + cE_{25}, E_{15}\}$ with
      $c \neq 0$ then
      $y_{34} \neq 0, x_{23}= \frac{y_{23}}{y_{34}} x_{34}$ and
      $y_{23}\left(x_{12}y_{34}-x_{34}y_{12}\right)\neq 0$;
\item if
      $\mathfrak{z}(\h) = \operatorname{span}_{\k} \{E_{14}, E_{15}\}$ then
      $\left(y_{{12}}x_{23} -x_{{12}}y_{{23}}\right) \left(x_{{34}}y_{{35}}-x_{{35}}y_{{34}} \right)\neq 0$;
\item if
      $\mathfrak{z}(\h) = \operatorname{span}_{\k} \{E_{25}, E_{15}\}$ then
      $\left(x_{23}y_{{12}} -x_{{12}}y_{{23}}\right)\left(x_{{34}}y_{{35}}-x_{{35}}y_{{34}}\right)\neq 0$.
\end{enumerate}
\end{lemma}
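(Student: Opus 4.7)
The plan is to perform an explicit matrix computation and then split into the three cases of Lemma \ref{lema:1}. Since $X_1, X_2$ are strictly upper triangular, $X_3 = [X_1, X_2]$ has at most the three ``upper-superdiagonal'' entries
\[(X_3)_{13} = x_{12}y_{23} - y_{12}x_{23},\ (X_3)_{24} = x_{23}y_{34} - y_{23}x_{34},\ (X_3)_{35} = x_{34}y_{45} - y_{34}x_{45}\]
besides possibly nonzero entries at the corners $(1,4), (2,5), (1,5)$. Then $Z_1 = [X_1, X_3]$ and $Z_2 = [X_2, X_3]$ have potentially nonzero entries only at $(1,4), (2,5), (1,5)$, and a direct calculation gives
\[(Z_1)_{14} = x_{12}(X_3)_{24} - x_{34}(X_3)_{13},\quad (Z_1)_{25} = x_{23}(X_3)_{35} - x_{45}(X_3)_{24},\]
together with the analogous formulas for $(Z_2)_{14}, (Z_2)_{25}$ (replace each $x_{ij}$ by $y_{ij}$).

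Each of the three cases then amounts to a specific linear constraint on the entries of $Z_1, Z_2$: in Case (1), $(Z_i)_{25} = c\,(Z_i)_{14}$; in Case (2), $(Z_i)_{25} = 0$; in Case (3), $(Z_i)_{14} = 0$. One further exploits the auxiliary constraint $[X_i, Z_j]_{15} = 0$ (the only entry that need not vanish automatically, since $[X_i, Z_j] \in \mathfrak{n}_5(\k)^{(4)} = \k\cdot E_{15}$), which translates to the scalar identities $x_{12}(Z_j)_{25} = (Z_j)_{14}\,x_{45}$ and $y_{12}(Z_j)_{25} = (Z_j)_{14}\,y_{45}$ for $j=1,2$. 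Combined with the linear independence of $\{Z_1, Z_2\}$ (a nonvanishing $2\times 2$ determinant in their coordinates along the chosen basis of $\mathfrak{z}(\h)$) and the nondegeneracy of $X_3$ itself, these equations will yield the claimed nonvanishing relations in each case.

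The main obstacle is Case (1), in which one must establish the \emph{equality} $(X_3)_{24} = 0$. The plan is to argue by contradiction: assuming $(X_3)_{24} \neq 0$, the two Case-(1) equations form a $2\times 2$ system (in the unknowns $(X_3)_{13}, (X_3)_{35}$) whose determinant is $c\,(X_3)_{24} \neq 0$, so one can solve for $(X_3)_{13}$ and $(X_3)_{35}$ explicitly in terms of $x_{12}y_{34}-x_{34}y_{12}$ and $x_{23}y_{45}-y_{23}x_{45}$. The auxiliary constraint then forces $x_{45}=c\,x_{12}$ and $y_{45}=c\,y_{12}$; substituting back produces the polynomial identity $(X_3)_{13} = -\tfrac{1}{2}(X_3)_{13}$, hence $(X_3)_{13}=0$ and $(X_3)_{35}=0$. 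A direct inspection of $(Z_j)_{15}$ under these constraints shows that the $2\times 2$ linear-independence determinant of $\{Z_1, Z_2\}$ vanishes, a contradiction. Having obtained $(X_3)_{24}=0$, the stated equality $x_{23}=(y_{23}/y_{34})\,x_{34}$ and the nonvanishing of $y_{34}$, $y_{23}$, and $x_{12}y_{34}-x_{34}y_{12}$ are extracted by substituting $(X_3)_{24}=0$ back into the Case-(1) and auxiliary equations and invoking the linear independence of $\{Z_1, Z_2\}$ once more. Cases (2) and (3) follow by similar but easier analyses, the branching being into subcases according to the rank of the $2\times 2$ coefficient matrices $\bigl(\begin{smallmatrix}x_{23} & -x_{45}\\ y_{23} & -y_{45}\end{smallmatrix}\bigr)$ and $\bigl(\begin{smallmatrix}x_{12} & -x_{34}\\ y_{12} & -y_{34}\end{smallmatrix}\bigr)$, the degenerate subcase always being discarded by contradicting either the independence of $\{Z_1,Z_2\}$ or $X_3 \notin \mathfrak{z}(\h)$.
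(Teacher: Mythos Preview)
Your approach is essentially the same as the paper's: explicit computation of the $(1,4)$, $(2,5)$, $(1,5)$ entries of $Z_1,Z_2$, followed by the case analysis dictated by Lemma~\ref{lema:1}. The paper, however, organizes Case~(1) more efficiently. What you call the ``auxiliary constraint'' $[X_i,Z_j]=0$ is equivalent to saying that $X_1,X_2$ lie in the centralizer of $\mathfrak{z}(\h)$; since $\mathfrak{z}(\h)=\operatorname{span}_{\k}\{E_{14}+cE_{25},E_{15}\}$, this immediately gives $x_{45}=cx_{12}$ and $y_{45}=cy_{12}$ \emph{before} anything else. With that in hand, the two constraints $(Z_i)_{25}=c(Z_i)_{14}$ simplify directly to $x_{12}(X_3)_{24}=0$ and $y_{12}(X_3)_{24}=0$, and the dichotomy ``either $(X_3)_{24}=0$ or $x_{12}=y_{12}=0$'' finishes the argument without any contradiction set-up or inspection of $(Z_j)_{15}$.

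Your contradiction route also works, but two small points: (i) the substitution actually yields $(X_3)_{13}=-2(X_3)_{13}$, not $-\tfrac12(X_3)_{13}$ (harmless in characteristic~$0$); and (ii) once you have $(X_3)_{13}=(X_3)_{35}=0$ together with $x_{45}=cx_{12}$, $y_{45}=cy_{12}$, you do not need to inspect $(Z_j)_{15}$ at all: from $(X_3)_{35}=c(x_{34}y_{12}-y_{34}x_{12})=0$ and $(X_3)_{13}=x_{12}y_{23}-y_{12}x_{23}=0$ one sees (assuming without loss $x_{12}\neq0$, else both $(Z_i)_{14}$ vanish) that $y_{23},y_{34}$ are proportional to $x_{23},x_{34}$ with the same ratio, whence $(X_3)_{24}=0$ directly. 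Cases~(2) and~(3) are handled in the paper exactly as you outline.
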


\begin{proof}
Since
$X_3= [X_1, X_2], Z_1= [X_1, X_3] \text{ and }  Z_2= [X_2, X_3]$, we get
$$
Z_1 = \left[
      \begin{smallmatrix}
      0 & 0 & 0 & a_1 & a_2 \\
      0 & 0 & 0 & 0 &  a_3 \\
      0 & 0 & 0 & 0 & 0 \\
      0 & 0 & 0 & 0 & 0 \\
      0 & 0 & 0 & 0 & 0
      \end{smallmatrix}
      \right] \text{ and }
Z_2 = \left[
      \begin{smallmatrix}
      0 & 0 & 0 & b_1 & b_2 \\
      0 & 0 & 0 & 0 &  b_3 \\
      0 & 0 & 0 & 0 & 0 \\
      0 & 0 & 0 & 0 & 0 \\
      0 & 0 & 0 & 0 & 0
      \end{smallmatrix}
      \right]
$$
with
$$
\begin{cases}
a_1= x_{{12}} \left( x_{{23}}y_{{34}}-y_{{23}}x_{{34}} \right) -
       \left( x_{{12}}y_{{23}}-y_{{12}}x_{{23}}\right) x_{{34}},\\
a_2 = x_{{12}} \left( x_{{23}}y_{{35}}+x_{{24}}y_{{45}}-y_{{23}}x_{{35}}-y_{{24}}x_{{45}} \right) +
             x_{{13}} \left( x_{{34}}y_{{45}}-y_{{34}}x_{{45}} \right) +\\
            \;\;\;\;\;\;\;\;- x_{{35}}\left( x_{{12}}y_{{23}}-y_{{12}}x_{{23}} \right) - x_{{45}} \left( x_{{12}}y_{{24}}+x_{{13}}y_{{34}}-y_{{12}}x_{{24}}-y_{{13}}x_{{34}} \right),\\
a_3= x_{{23}} \left( x_{{34}}y_{{45}}-y_{{34}}x_{{45}} \right) -  x_{{45}}\left( x_{{23}}y_{{34}}-y_{{23}}x_{{34}} \right);\\
b_1= y_{{12}} \left( x_{{23}}y_{{34}}-y_{{23}}x_{{34}} \right) -
        y_{{34}}\left( x_{{12}}y_{{23}}-y_{{12}}x_{{23}} \right),\\
b_2 = y_{{12}} \left( x_{{23}}y_{{35}}+x_{{24}}y_{{45}}-y_{{23}}x_{{35}}-y_{{24}}x_{{45}} \right) +
      y_{{13}} \left( x_{{34}}y_{{45}}-y_{{34}}x_{{45}} \right)+\\
\;\;\;\;\;\;\;\;- y_{{35}}\left( x_{{12}}y_{{23}}-y_{{12}}x_{{23}} \right)- y_{{45}}\left( x_{{12}}y_{{24}}+x_{{13}}y_{{34}}-y_{{12}}x_{{24}}-y_{{13}}x_{{34}} \right),\\
b_3= y_{{23}} \left( x_{{34}}y_{{45}}-y_{{34}}x_{{45}} \right) - y_{{45}} \left( x_{{23}}y_{{34}}-y_{{23}}x_{{34}} \right).
\end{cases}
$$

\noindent Case $(1)$. Let
$c \neq 0$, if
$\mathfrak{z}(\h) = \operatorname{span}_{\k}\{E_{14} + cE_{25}, E_{15}\}$ we obtain
$$\h \subseteq \left\{ \left[
      \begin{smallmatrix}
      0 & m_{12} & m_{13} & m_{14} & m_{15} \\
      0 & 0 & m_{23} & m_{24} & m_{25} \\
      0 & 0 & 0 & m_{34} & m_{35} \\
      0 & 0 & 0 & 0 & c m_{12} \\
      0 & 0 & 0 & 0 & 0
      \end{smallmatrix}
      \right]: m_{ij} \in \k\right\}.$$ Therefore
\begin{enumerate}[(a)]
\item $x_{45}= c x_{12}, y_{45}= c y_{12}$ and
\item $Z_1, Z_2 \in \operatorname{span}_{\k}\{E_{14} + cE_{25}, E_{15}\}$.
\end{enumerate}
By
$(a)$, we get
$$
\begin{cases}
a_1 = x_{{12}} \left(x_{{23}}y_{{34}}-y_{{23}}x_{{34}} \right) -  x_{{34}} \left( x_{{12}}y_{{23}}-y_{{12}}x_{{23}} \right),\\
a_3 = c\left(x_{{23}} \left(x_{{34}}y_{{12}}-y_{{34}}x_{{12}} \right) -  x_{{12}}\left( x_{{23}}y_{{34}}-y_{{23}}x_{{34}} \right)\right),\\
b_1 = y_{{12}} \left(x_{{23}}y_{{34}}-y_{{23}}x_{{34}} \right) -  y_{{34}} \left( x_{{12}}y_{{23}}-y_{{12}}x_{{23}} \right),\\
b_3 = c\left(y_{{23}} \left(x_{{34}}y_{{12}}-y_{{34}}x_{{12}} \right) -  y_{{12}} \left( x_{{23}}y_{{34}}-y_{{23}}x_{{34}} \right)\right).
\end{cases}
$$
Since
$c \neq 0$ and by
$(b)$, we have
$a_3= c a_1; b_3= c a_1$, it follows that
$$
\begin{cases}
 x_{{12}} \left( x_{{23}}y_{{34}} - y_{23}x_{34} \right) &= 0, \\
 y_{{12}} \left(x_{{23}}y_{{34}} - y_{{23}}x_{{34}} \right)&=0.
\end{cases}
$$
By straightforward calculation we have
the only solution that makes
$\left\{Z_1, Z_2\right\}$ is linearly independent is
$y_{34} \neq 0 \text{ and } x_{23} = \frac{y_{23}}{y_{34}}x_{34}$ and since
$$Z_1 = \left[
      \begin{smallmatrix}
      0 & 0 & 0 & a_1 & a_2 \\
      0 & 0 & 0 & 0 &  c a_1 \\
      0 & 0 & 0 & 0 & 0 \\
      0 & 0 & 0 & 0 & 0 \\
      0 & 0 & 0 & 0 & 0
      \end{smallmatrix}
      \right] \text{ and }
Z_2 = \left[
      \begin{smallmatrix}
      0 & 0 & 0 & b_1 & b_2 \\
      0 & 0 & 0 & 0 &  c b_1 \\
      0 & 0 & 0 & 0 & 0 \\
      0 & 0 & 0 & 0 & 0 \\
      0 & 0 & 0 & 0 & 0
      \end{smallmatrix}
      \right],
$$
we have
$\det \left[
     \begin{smallmatrix}
     a_1 & a_2 \\
     b_1 & b_2
     \end{smallmatrix}
     \right] \neq 0$ with
$$
\begin{cases}
      a_2 = x_{{12}} \left( x_{{23}}y_{{35}}-y_{{23}}x_{{35}}+ c x_{{24}}y_{{12}}- c y_{{24}}x_{{12}} \right)+
      c x_{{13}} \left( x_{{34}}y_{{12}}-y_{{34}}x_{{12}} \right) +\\
      \;\;\;\;\;\;\;\;-  x_{{35}}\left( x_{{12}}y_{{23}}-y_{{12}}x_{{23}} \right)- cx_{{12}}\left( x_{{12}}y_{{24}}+x_{{13}}y_{{34}}-y_{{12}}x_{{24}}-y_{{13}}x_{{34}} \right),\\
b_2 = y_{{12}} \left( x_{{23}}y_{{35}}-y_{{23}}x_{{35}}+ c x_{{24}}y_{{12}}- cy_{{24}}x_{{12}} \right) +
      c y_{{13}} \left( x_{{34}}y_{{12}}-y_{{34}}x_{{12}} \right) + \\
      \;\;\;\;\;\;\;\;- y_{{35}} \left( x_{{12}}y_{{23}}-y_{{12}}x_{{23}} \right)- c y_{{12}} \left(x_{{12}}y_{{24}}+x_{{13}}y_{{34}}-y_{{12}}x_{{24}}-y_{{13}}x_{{34}} \right).
\end{cases}
$$
Therefore
$$
\det \left[
     \begin{smallmatrix}
     a_1 & a_2 \\
     b_1 & b_2
     \end{smallmatrix}
     \right]= {\frac {-2 y_{{23}} \left(y_{{34}}x_{{12}}- x_{{34}}y_{{12}}\right)^{2} a}{{y_{{34}}}^{2}}}\neq 0
$$
with
$a=c y_{{34}} \left(y_{{24}}x_{{12}} + x_{{13}}{y_{{34}}}
- x_{{34}}y_{{13}}-x_{{24}}y_{{12}} \right) + y_{{23}}\left(x_{{35}}y_{{34}}- x_{{34}}y_{{35}}\right)$.

\noindent Case $(2)$. If
$c = 0$ and
$\mathfrak{z}(\h) = \operatorname{span}_{\k} \{E_{14}, E_{15}\}$, we get
$$\h \subseteq \left\{ \left[
      \begin{smallmatrix}
      0 & m_{12} & m_{13} & m_{14} & m_{15} \\
      0 & 0 & m_{23} & m_{24} & m_{25} \\
      0 & 0 & 0 & m_{34} & m_{35} \\
      0 & 0 & 0 & 0 & 0 \\
      0 & 0 & 0 & 0 & 0
      \end{smallmatrix}
      \right]: m_{ij} \in \k\right\}.$$ Hence
$x_{45}= y_{45}= 0$, thus
$a_3= b_3= 0$. Since the set
$\left\{Z_1, Z_2\right\}$ is linearly independent, we obtain
$\det \left[
     \begin{smallmatrix}
     a_1 & a_2 \\
     b_1 & b_2
     \end{smallmatrix}
     \right] \neq 0$ with
$$
\begin{cases}
a_2 = x_{{12}} \left(x_{{23}}y_{{35}}-y_{{23}}x_{{35}}\right)- x_{{35}} \left( x_{{12}}y_{{23}}-y_{{12}}x_{{23}} \right),\\
b_2 = y_{{12}} \left(x_{{23}}y_{{35}}-y_{{23}}x_{{35}}\right) - y_{{35}} \left( x_{{12}}y_{{23}}-y_{{12}}x_{{23}} \right).
\end{cases}
$$
It follows that
$
\det \left[
     \begin{smallmatrix}
     a_1 & a_2 \\
     b_1 & b_2
     \end{smallmatrix}
     \right] = 2\left(y_{{12}}x_{23} -x_{{12}}y_{{23}}\right) ^{2} \left( x_{{34}}y_{
{35}}-x_{{35}}y_{{34}} \right) \neq 0.
$

\noindent Case $(3)$. Analysis similar to that in the proof of Case $(2)$ shows that
$$\left(y_{{12}}x_{23} -x_{{12}}y_{{23}}\right) ^{2} \left( x_{{34}}y_{{35}}-x_{{35}}y_{{34}} \right)\neq 0.$$
The proof is now completed.
\end{proof}

\begin{lemma}\label{lema:5}
Let
$\{X_1, X_2, X_3, Z_1, Z_2\} \subseteq \mathfrak{n}_5(\k)$ be a set linearly independent that verified Equation (\ref{eq:8}). Let
$X_4= [a_{ij}] \in \mathfrak{n}_5(\k)$ be non-zero matrix such that
$[X_3, X_4] = 0$.
\begin{enumerate}[(1)]
\item If
      $Z_1, Z_2, [X_1, X_4], [X_2, X_4] \in \operatorname{span}_{\k} \{E_{14} + cE_{25}, E_{15}\}$ and
      $a_{45}= c a_{12}$ then
      $a_{12}= a_{23}= a_{34}= 0$ and
      $a_{35}= -c a_{13}\frac{y_{34}}{y_{23}}$;
\item if
      $Z_1, Z_2, [X_1, X_4], [X_2, X_4] \in \operatorname{span}_{\k} \{E_{25}, E_{15}\}$ and
      $a_{12}= 0$ then
      $a_{23}= a_{34}= a_{45}= 0$.
\end{enumerate}
\end{lemma}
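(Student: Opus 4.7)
My approach is direct matrix computation, exploiting the structural information supplied by Lemma~\ref{lema:2}. Write $X_4=[a_{ij}]$, $X_1=[x_{ij}]$, $X_2=[y_{ij}]$ as strictly upper triangular $5\times 5$ matrices, and recall that the superdiagonal of any commutator of strictly upper triangular matrices vanishes automatically; consequently only the entries of $[X_i,X_4]$ at positions $(i,j)$ with $j\geq i+2$ carry information. Membership of $M$ in Case~(1)'s span $\operatorname{span}_{\k}\{E_{14}+cE_{25},E_{15}\}$ is equivalent to $M_{13}=M_{24}=M_{35}=0$ together with $M_{25}=c\,M_{14}$, while membership in Case~(2)'s span $\operatorname{span}_{\k}\{E_{25},E_{15}\}$ is equivalent to $M_{13}=M_{14}=M_{24}=M_{35}=0$.

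For Case~(1), I first note that $\z(\h)=\operatorname{span}\{E_{14}+cE_{25},E_{15}\}$ with $c\neq 0$ forces $x_{45}=c x_{12}$ and $y_{45}=c y_{12}$; combined with the hypothesis $a_{45}=c a_{12}$, the $(3,5)$-entry equations for $[X_1,X_4]$ and $[X_2,X_4]$ become
$$
c(x_{34}a_{12}-a_{34}x_{12})=0,\qquad c(y_{34}a_{12}-a_{34}y_{12})=0,
$$
and the coefficient determinant $c(x_{12}y_{34}-x_{34}y_{12})$ is nonzero by Lemma~\ref{lema:2}(1), so $a_{12}=a_{34}=0$. Substituting into the $(2,4)$-entry equation for $[X_2,X_4]$ gives $-a_{23}y_{34}=0$, and $y_{34}\neq 0$ then yields $a_{23}=0$. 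With these three zeros in hand, the two ratio equations $([X_i,X_4])_{25}=c([X_i,X_4])_{14}$ for $i=1,2$ become a pair of linear equations in the unknowns $a_{24},a_{35}$. Eliminating $a_{24}$ by means of the identity $x_{23}y_{34}=y_{23}x_{34}$ supplied by Lemma~\ref{lema:2}(1) and invoking $x_{12}y_{34}-x_{34}y_{12}\neq 0$ yields $a_{24}=0$ and the claimed formula $a_{35}=-c\,a_{13}\,y_{34}/y_{23}$.

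Case~(2) is similar but more transparent. The assumption $\z(\h)=\operatorname{span}\{E_{25},E_{15}\}$ forces $x_{45}=y_{45}=0$, so every element of $\h$ has vanishing $(4,5)$-entry. With the hypothesis $a_{12}=0$ and the non-degeneracy $(x_{23}y_{12}-x_{12}y_{23})(x_{34}y_{35}-x_{35}y_{34})\neq 0$ from Lemma~\ref{lema:2}(3), the $(1,3)$-entry equations force $a_{23}=0$ (since $x_{12}$ and $y_{12}$ cannot simultaneously vanish, as that would nullify the first factor), the $(2,4)$-entry equations then force $a_{34}=0$ (since $x_{23}$ and $y_{23}$ cannot simultaneously vanish, for the same reason), and finally the $(3,5)$-entry equations force $a_{45}=0$ (since $x_{34}$ and $y_{34}$ cannot simultaneously vanish, else the second factor would be zero).

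The main obstacle is organising the roughly dozen scalar equations produced in each case so that each vanishing conclusion is matched to the correct non-degeneracy factor from Lemma~\ref{lema:2}. The most delicate step is the final calculation of $a_{35}$ in Case~(1), where one must simultaneously extract $a_{24}=0$ and the expression for $a_{35}$ from the two ratio equations by using both the proportionality $x_{23}y_{34}=y_{23}x_{34}$ and the linear independence relation $x_{12}y_{34}-x_{34}y_{12}\neq 0$.
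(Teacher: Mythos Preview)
Your approach differs from the paper's in an interesting way: the paper derives its key equations from the hypothesis $[X_3,X_4]=0$ (expanding $X_3=[X_1,X_2]$ and reading off the entries of $[[X_1,X_2],X_4]$), supplementing this with just one equation from $[X_2,X_4]\in\operatorname{span}\{E_{14}+cE_{25},E_{15}\}$. You instead ignore $[X_3,X_4]=0$ entirely and work purely from the span conditions on $[X_1,X_4]$ and $[X_2,X_4]$. For $c\neq 0$ in Case~(1) your route is valid and even extracts the extra fact $a_{24}=0$, but two genuine gaps remain.

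First, you only treat $c\neq 0$ in Case~(1). When $c=0$ your $(3,5)$-entry equations collapse to $0=0$ and Lemma~\ref{lema:2}(1) no longer applies (its hypothesis is $c\neq 0$); the paper handles $c=0$ separately via Lemma~\ref{lema:2}(2) and a different set of equations, concluding $a_{12}=a_{23}=a_{34}=a_{35}=0$.

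Second, your Case~(2) argument rests on the assertion that $\z(\h)=\operatorname{span}\{E_{25},E_{15}\}$ forces $x_{45}=y_{45}=0$. This is not what that hypothesis gives: since $E_{25}\in\z(\h)\subseteq\h$, computing $[X,E_{25}]=X_{12}E_{15}$ shows that what is actually forced is $x_{12}=y_{12}=0$ for all $X\in\h$. (Indeed, in the paper's explicit $L_{5,9}$ nilrepresentation in Table~\ref{Tabla:6} the center is $\operatorname{span}\{E_{25},E_{15}\}$ while $x_{45},y_{45}$ are not both zero.) With $x_{12}=y_{12}=0$, your $(1,3)$-entry equations $x_{12}a_{23}=y_{12}a_{23}=0$ are vacuous and cannot deliver $a_{23}=0$, and the first factor $x_{23}y_{12}-x_{12}y_{23}$ of the non-degeneracy you quote from Lemma~\ref{lema:2}(3) is identically zero. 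The paper's treatment of this case proceeds by duality with Case~(1), $c=0$ (under the anti-automorphism $i\leftrightarrow 6-i$ of $\mathfrak{n}_5(\k)$, which swaps $E_{14}\leftrightarrow E_{25}$), so the relevant non-degeneracy and entry equations are the duals of those in Case~(2) of Lemma~\ref{lema:2}, not the ones you invoke.
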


\begin{proof}
Let
$\h$ be a Lie algebra generated by
$\{X_1, X_2, X_3, Z_1, Z_2\}$. By Remark \ref{remark:L59}, we have
$\h$ is a Lie subalgebra of
$\mathfrak{n}_5(\k)$ isomorphic to
$L_{5,9}$ and the center is
$\mathfrak{z}(\h) = \operatorname{span}_{\k} \{Z_1, Z_2\}$.

\noindent Case $(1)$. If
$Z_1, Z_2 \in \k \{E_{14} + cE_{25}, E_{15}\}$, we get
$$\h \subseteq \left\{ \left[
      \begin{smallmatrix}
      0 & m_{12} & m_{13} & m_{14} & m_{15} \\
      0 & 0 & m_{23} & m_{24} & m_{25} \\
      0 & 0 & 0 & m_{34} & m_{35} \\
      0 & 0 & 0 & 0 & c m_{12} \\
      0 & 0 & 0 & 0 & 0
      \end{smallmatrix}
      \right]: m_{ij} \in \k\right\}$$
for every
$c \in \k$.
We first assume that
$c \neq 0$. By Lemma \ref{lema:2} in the case (1), we get
$$
X_1= \left[
      \begin{smallmatrix}
      0 & x_{12} & x_{13} & x_{14} & x_{15} \\
      0 & 0 & \frac{y_{23}}{y_{34}}x_{34} & x_{24} & x_{25} \\
      0 & 0 & 0 & x_{34} & x_{35} \\
      0 & 0 & 0 & 0 & cx_{12} \\
      0 & 0 & 0 & 0 & 0
      \end{smallmatrix}
      \right],
X_2= \left[
      \begin{smallmatrix}
      0 & y_{12} & y_{13} & y_{14} & y_{15} \\
      0 & 0 & y_{23} & y_{24} & y_{25} \\
      0 & 0 & 0 & y_{34} & y_{35} \\
      0 & 0 & 0 & 0 &  cy_{12} \\
      0 & 0 & 0 & 0 & 0
      \end{smallmatrix}
      \right]$$ and
\begin{equation}\label{eq:9}
y_{23}\left(x_{12}y_{34}-x_{34}y_{12}\right)\neq 0.
\end{equation}
Since
$0= [X_3, X_4]= [[X_1, X_2], X_4]$, we obtain
$$\begin{cases}
(a) & a_{{34}}\frac{y_{{23}}}{y_{34}}(y_{{34}}x_{{12}} - x_{{34}}y_{{12}})= 0,\\
(b) & a_{{23}}c \left(y_{{34}}x_{{12}} - x_{{34}}y_{{12}} \right)= 0,\\
(c) & a_{{35}}\frac{y_{{23}}}{y_{34}}(y_{{34}}x_{{12}} - x_{{34}}y_{{12}}) + a_{{13}} c\left(y_{{34}}x_{{12}}- x_{{34}}y_{{12}}\right) - a_{12} m=0
      \text{ with }\\
    &m = 2c(x_{12}y_{24} - y_{12}x_{24}) + c(x_{13}y_{34}- y_{13}x_{34}) + \frac{y_{23}}{y_{34}}(x_{35}y_{34} - x_{34}y_{35}).
\end{cases}
$$
Since
$[X_2, X_4] \in \operatorname{span}_{\k} \{E_{14} + cE_{25}, E_{15}\}$, we have
\begin{enumerate}[(d)]
\item $y_{{12}}a_{{23}}-a_{{12}}y_{{23}} =0$.
\end{enumerate}
By $(a), (b), (d)$ and Equation (\ref{eq:9}), we get
$a_{34}= a_{23}= a_{12}= 0$. Finally, by $(c)$ and Equation (\ref{eq:9}), it follows that
$a_{{35}}\frac{y_{{23}}}{y_{34}} + c a_{{13}}= 0$.

Now assume
$c= 0$. By Lemma \ref{lema:2} in the case $(2)$, we obtain
\begin{equation}\label{eq:2}
\left(y_{{12}}x_{23} -x_{{12}}y_{{23}}\right) ^{2} \left( x_{{34}}y_{{35}}-x_{{35}}y_{{34}} \right) \neq 0.
\end{equation}
Since
$[X_1, X_4], [X_2, X_4] \in \operatorname{span}_{\k}  \{E_{14}, E_{15}\}$, we have the following equation
$$\begin{cases}
     \setlength{\unitlength}{13.10pt}
      \begin{picture}(20,3.5)
      %\put(0,3){$\{$}
      \put(0.1,3){$x_{{12}}a_{{23}}-a_{{12}}x_{{23}}= 0$,}          \put(10,3){$y_{{12}}a_{{23}}-a_{{12}}y_{{23}}=0$,}
      \put(0.1,1.5){$x_{{23}}a_{{34}}-a_{{23}}x_{{34}}= 0$,}        \put(10,1.5){$y_{{23}}a_{{34}}-a_{{23}}y_{{34}}= 0$,}
      \put(0.1,0){$x_{{23}}a_{{35}}-a_{{23}}x_{{35}}= 0$,}          \put(10,0){$y_{{23}}a_{{35}}-a_{{23}}y_{{35}}= 0$.}
      \end{picture}
\end{cases}
$$
By Equation (\ref{eq:2}), we get
$a_{12}= a_{23}= a_{34}= a_{35}= 0$.

\noindent Case $(2)$. Analysis similar to that in the Case $(1)$ with
$c = 0$, shows that
$a_{13}= a_{23}= a_{34}= 0$ and the proof is complete.
\end{proof}

We are now in position to prove the Propositions \ref{lema:3} and \ref{lema:4}.

\begin{proof}
$[\text{Proof of Proposition \ref{lema:3}}.]$
Let
$\h$ be the Lie algebra generated by
$\{X_1, X_2, X_3, Z_1, Z_2\}.$ By Remark \ref{remark:L59}, we have
$\h$ is a Lie subalgebra of
$\mathfrak{n}_5(\k)$ isomorphic to
$L_{5,9}$ and by Lemma \ref{lema:1}, we have the center
$\mathfrak{z}(\h)$ is of any of the following ways
\begin{enumerate}[(1)]
\item $\mathfrak{z}(\h)= \operatorname{span}_{\k}  \{E_{14} + cE_{25}, E_{15}\}$ with
      $c \in \k$;
\item $\mathfrak{z}(\h)= \operatorname{span}_{\k}  \{E_{25}, E_{15}\}$.
\end{enumerate}
We first assume the case $(1)$. If
$c \neq 0$, by Lemma \ref{lema:2}(1) and Lemma \ref{lema:5} in the case $(1)$, we get
$$
X_1 = \left[
      \begin{smallmatrix}
      0 & x_{12} & x_{13} & x_{14} & x_{15} \\
      0 & 0 & \frac{y_{23}}{y_{34}}x_{34} & x_{24} & x_{25} \\
      0 & 0 & 0 & x_{34} & x_{35} \\
      0 & 0 & 0 & 0 & c x_{12} \\
      0 & 0 & 0 & 0 & 0
      \end{smallmatrix}
      \right],
X_2 = \left[
      \begin{smallmatrix}
      0 & y_{12} & y_{13} & y_{14} & y_{15} \\
      0 & 0 & y_{23} & y_{24} & y_{25} \\
      0 & 0 & 0 & y_{34} & y_{35} \\
      0 & 0 & 0 & 0 & c y_{12} \\
      0 & 0 & 0 & 0 & 0
      \end{smallmatrix}
      \right],
$$
\begin{equation}\label{eq:10}
y_{{23}} \left(y_{{34}}x_{{12}} - x_{{34}}y_{{12}}\right)\neq 0,
\end{equation}
and
$X_4= \left[
      \begin{smallmatrix}
      0 & 0 & a_{13} & a_{14} & a_{15} \\
      0 & 0 & 0 & a_{24} & a_{25} \\
      0 & 0 & 0 & 0 & -c a_{13}\frac{y_{34}}{y_{23}} \\
      0 & 0 & 0 & 0 &  0 \\
      0 & 0 & 0 & 0 & 0
      \end{smallmatrix}
      \right]$.
From
$[X_1, X_4] = [X_2, X_4]= 0$, we obtain the following equations
\begin{enumerate}[(a)]
\item $a_{{24}}x_{{12}}-a_{{13}}x_{{34}}= 0$,
\item $a_{{24}}y_{{12}}-a_{{13}}y_{{34}}= 0$,
\item $\left(a_{{25}}-c a_{{14}}\right)x_{{12}} - a_{{13}}\left(\frac {cx_{{13}}y_{{34}}}{y_{{23}}}-x_{{35}}\right)= 0$,
\item $\left(a_{{25}}-ca_{{14}}\right)y_{{12}} - a_{{13}}\left(\frac{cy_{{13}}y_{{34}}}{y_{{23}}}-y_{{35}}\right)= 0$.
\end{enumerate}
By Equations $(a) \text{ and } (b)$ and by Equation (\ref{eq:10}), we obtain
$a_{13}= a_{24}= 0$. Hence, by Equations $(c) \text{ and } (d)$ and by Equation (\ref{eq:10}) we get
$$X_4= \left[
      \begin{smallmatrix}
      0 & 0 & 0 & a_{14} & a_{15} \\
      0 & 0 & 0 & 0      &c a_{14} \\
      0 & 0 & 0 & 0 & 0 \\
      0 & 0 & 0 & 0 &  0 \\
      0 & 0 & 0 & 0 & 0
      \end{smallmatrix}
      \right].
$$

If
$c= 0$, by Lemma \ref{lema:2} in the case $(2)$ and Lemma \ref{lema:5} in the case $(1)$, we get
$$
X_1 = \left[
      \begin{smallmatrix}
      0 & x_{12} & x_{13} & x_{14} & x_{15} \\
      0 & 0 & x_{23} & x_{24} & x_{25} \\
      0 & 0 & 0 & x_{34} & x_{35} \\
      0 & 0 & 0 & 0 & 0\\
      0 & 0 & 0 & 0 & 0
      \end{smallmatrix}
      \right],
X_2 = \left[
      \begin{smallmatrix}
      0 & y_{12} & y_{13} & y_{14} & y_{15} \\
      0 & 0 & y_{23} & y_{24} & y_{25} \\
      0 & 0 & 0 & y_{34} & y_{35} \\
      0 & 0 & 0 & 0 & 0 \\
      0 & 0 & 0 & 0 & 0
      \end{smallmatrix}
      \right]
$$
\begin{equation}\label{eq:11}
\left(y_{{12}}x_{23} -x_{{12}}y_{{23}}\right)\left( x_{{34}}y_{{35}}-x_{{35}}y_{{34}} \right) \neq 0
\end{equation}
and
$
X_4= \left[
      \begin{smallmatrix}
      0 & 0 & a_{13} & a_{14} & a_{15} \\
      0 & 0 & 0 & a_{24} & a_{25} \\
      0 & 0 & 0 & 0 & 0\\
      0 & 0 & 0 & 0 &  0 \\
      0 & 0 & 0 & 0 & 0
      \end{smallmatrix}
      \right]
$. From
$[X_1, X_4] = [X_2, X_4]= 0$, we obtain the following equations
$$
\begin{cases}
\setlength{\unitlength}{13.10pt}
      \begin{picture}(20,1.2)
      \put(0.1,1.2){$x_{{12}}a_{{24}}-a_{{13}}x_{{34}}= 0$,}          \put(10,1.2){$x_{{12}}a_{{25}}-a_{{13}}x_{{35}}= 0$,}
      \put(0.1,0){$y_{{12}}a_{{24}}-a_{{13}}y_{{34}}= 0$,}        \put(10,0){$y_{{12}}a_{{25}}-a_{{13}}y_{{35}}= 0$.}
      \end{picture}
\end{cases}
$$
By Equation \ref{eq:11}, it is not difficult to see that
$$X_4= \left[
      \begin{smallmatrix}
      0 & 0 & 0 & a_{14} & a_{15} \\
      0 & 0 & 0 & 0      &0 \\
      0 & 0 & 0 & 0 & 0 \\
      0 & 0 & 0 & 0 &  0 \\
      0 & 0 & 0 & 0 & 0
      \end{smallmatrix}
      \right].
$$

\noindent Case $(2)$. Analysis similar to that in the Case $(1)$ with
$c = 0$, shows that
$$X_4= \left[
      \begin{smallmatrix}
      0 & 0 & 0 & 0 & a_{15} \\
      0 & 0 & 0 & 0      &a_{25} \\
      0 & 0 & 0 & 0 & 0 \\
      0 & 0 & 0 & 0 &  0 \\
      0 & 0 & 0 & 0 & 0
      \end{smallmatrix}
      \right],
$$ and the Proposition is proved.
\end{proof}

\begin{proof}$[\text{Proof of Proposition \ref{lema:4}.}]$
Let
$\h$ be the Lie algebra generated by
$\{X_1, X_2, X_3, Z_1, Z_2\}.$ By Remark \ref{remark:L59}, we have
$\h$ is a Lie subalgebra of
$\mathfrak{n}_5(\k)$ isomorphic to
$L_{5,9}$ and by Lemma \ref{lema:1}, we have the center
$\mathfrak{z}(\h)$ is of any of the following ways
\begin{enumerate}[(1)]
\item $\mathfrak{z}(\h)= \operatorname{span}_{\k}  \{E_{14} + cE_{25}, E_{15}\}$ with
      $c \in \k$;
\item $\mathfrak{z}(\h)= \operatorname{span}_{\k}  \{E_{25}, E_{15}\}$.
\end{enumerate}
We first assume the Case $(1)$. If
$c \neq 0$, by Lemma \ref{lema:2}(1) and Lemma \ref{lema:5}(1), we get
$$
X_1 = \left[
      \begin{smallmatrix}
      0 & x_{12} & x_{13} & x_{14} & x_{15} \\
      0 & 0 & \frac{y_{23}}{y_{34}}x_{34} & x_{24} & x_{25} \\
      0 & 0 & 0 & x_{34} & x_{35} \\
      0 & 0 & 0 & 0 & c x_{12} \\
      0 & 0 & 0 & 0 & 0
      \end{smallmatrix}
      \right],
X_2 = \left[
      \begin{smallmatrix}
      0 & y_{12} & y_{13} & y_{14} & y_{15} \\
      0 & 0 & y_{23} & y_{24} & y_{25} \\
      0 & 0 & 0 & y_{34} & y_{35} \\
      0 & 0 & 0 & 0 & c y_{12} \\
      0 & 0 & 0 & 0 & 0
      \end{smallmatrix}
      \right],
$$
\begin{equation}\label{eq:12}
y_{{23}} \left(y_{{34}}x_{{12}} - x_{{34}}y_{{12}}\right)\neq 0
\end{equation}
and
$X_4= \left[
      \begin{smallmatrix}
      0 & 0 & a_{13} & a_{14} & a_{15} \\
      0 & 0 & 0 & a_{24} & a_{25} \\
      0 & 0 & 0 & 0 & -c a_{13}\frac{y_{34}}{y_{23}} \\
      0 & 0 & 0 & 0 &  0 \\
      0 & 0 & 0 & 0 & 0
      \end{smallmatrix}
      \right].
$
From
$[X_2, X_4]= Z_1, [X_1, X_4]= \epsilon Z_2$, we obtain the following equations
\begin{enumerate}[(a)]
\item $a_{24}x_{12} - a_{13}x_{34} = -a_{13}x_{34} - a_{24}x_{12}$,
\item $a_{24}y_{12} - a_{13}y_{34} = -a_{13}y_{34} - a_{24}y_{12}$,
\item $a_{{13}}y_{{34}}= \left( x_{{12}}y_{{23}}-{\frac {y_{{12}}y_{{23}}x_{{34}}}{y_{{34}}}} \right) x_{{34}}$,
\item $a_{{13}}x_{{34}}= \epsilon \left( x_{{12}}y_{{23}}-{\frac {y_{{12}}y_{{23}}x_{{34}}}{y_{{34}}}} \right) y_{{34}}$.
\end{enumerate}
By $(a), (b)$ and Equation (\ref{eq:12}), we have
$a_{24}= 0$. By $(c), (d)$ and Equation (\ref{eq:12}), we get
$\epsilon = \frac{x_{34}^2}{y_{34}^2}$.

If
$c= 0$, by Lemma \ref{lema:2}(2) and Lemma \ref{lema:5}(1), we get
$$
X_1 = \left[
      \begin{smallmatrix}
      0 & x_{12} & x_{13} & x_{14} & x_{15} \\
      0 & 0 & x_{23} & x_{24} & x_{25} \\
      0 & 0 & 0 & x_{34} & x_{35} \\
      0 & 0 & 0 & 0 & 0\\
      0 & 0 & 0 & 0 & 0
      \end{smallmatrix}
      \right],
X_2 = \left[
      \begin{smallmatrix}
      0 & y_{12} & y_{13} & y_{14} & y_{15} \\
      0 & 0 & y_{23} & y_{24} & y_{25} \\
      0 & 0 & 0 & y_{34} & y_{35} \\
      0 & 0 & 0 & 0 & 0 \\
      0 & 0 & 0 & 0 & 0
      \end{smallmatrix}
      \right],$$
\begin{equation}\label{eq:13}
\left(y_{{12}}x_{23} -x_{{12}}y_{{23}}\right)\left( x_{{34}}y_{{35}}-x_{{35}}y_{{34}} \right) \neq 0
\end{equation} and
$
X_4= \left[
      \begin{smallmatrix}
      0 & 0 & a_{13} & a_{14} & a_{15} \\
      0 & 0 & 0 & a_{24} & a_{25} \\
      0 & 0 & 0 & 0 & 0\\
      0 & 0 & 0 & 0 &  0 \\
      0 & 0 & 0 & 0 & 0
      \end{smallmatrix}
      \right].
$
From
$[X_1, X_4] = \epsilon Z_2, [X_2, X_4]= Z_1$, we obtain the following equations
$$
\begin{cases}
x_{{12}}a_{{24}}-a_{{13}}x_{{34}}= \epsilon(y_{{12}} \left( x_{{23}}y_{{34}}-y_{{23}}x_{{34}} \right) - \left( x_{{12}}y_{{23}}-y_{{12}}x_{{23}}\right) y_{{34}}),\\
x_{{12}}a_{{25}}-a_{{13}}x_{{35}}= \epsilon( y_{{12}} \left( x_{{23}}y_{{35}}-y_{{23}}x_{{35}} \right) - \left( x_{{12}}y_{{23}}-y_{{12}}x_{{23}} \right) y_{{35}}),\\
y_{{12}}a_{{24}}-a_{{13}}y_{{34}}= x_{{12}} \left( x_{{23}}y_{{34}}-y_{{23}}x_{{34}} \right) - \left( x_{{12}}y_{{23}}-y_{{12}}x_{{23}}\right) x_{{34}},\\
y_{{12}}a_{{25}}-a_{{13}}y_{{35}}= x_{{12}} \left( x_{{23}}y_{{35}}-y_{{23}}x_{{35}}\right) - \left( x_{{12}}y_{{23}}-y_{{12}}x_{{23}} \right) x_{{35}}.
\end{cases}
$$
By Equation (\ref{eq:13}), we verify computationally that the only solution such that the set
$\{X_1, X_2, X_3, X_4, Z_1, Z_2\} \subseteq \mathfrak{n}_5(\k)$ is a linearly independent is if there exist
$a \in \k$ such that
$a^2 - \epsilon= 0$.

\noindent Case $(2)$. Analysis similar to that in the Case $(1)$ with
$c= 0$, shows that
$\{X_1, X_2, X_3, X_4, Z_1, Z_2\} \subseteq \mathfrak{n}_5(\k)$ is a linearly independent is if there exist
$a \in \k$ such that
$a^2 - \epsilon= 0$. It complete the proof.
\end{proof}

\begin{acknow}
This is part of the author's Ph.D. thesis written under supervision of  Professor Leandro Cagliero at the
University of C\'ordoba, Argentina.
\end{acknow}

%=============================================================================================

%====================================================================================

\begin{thebibliography}{BML}


%==============================================================================================

\bibitem[BNT]{BNT} J. C. Benjumea Acevedo, J. Nu{\~n}ez Valdes, A. F. Tenorio Villal\'on.
 \emph{Minimal Linear Representations of the Low-Dimensional Nilpotent Lie Algebras},
 Math. Scand. ,Vol. \textbf{102}, No. 1, (2008), 17-26.

\bibitem[Be]{Be} Y. Benoist.
 \emph{Une Nilvariete Non Affine},
  J. Diff. Geom., Vol. \textbf{41},(1995), 21-52.

\bibitem[B1]{BU1} D. Burde.
 \emph{Affine structures on nilmanifolds},
  J. Inter. Math., Vol. \textbf{7}(5),(1996), 599-616.

\bibitem[B2]{B} D. Burde.
\emph{A refinement of Ado's Theorem},
Archiv Math. Vol. \textbf{70}, (1998), 118-127.

\bibitem[B3]{B3} D. Burde.
\emph{Left-symmetric algebras, or pre-Lie algebras in geometry and physics},
Central European J. of Math. Vol. \textbf{4}(3), (2006), 323-357.

\bibitem[BG]{BG} D. Burde , F. Grunewald.
 \emph{Modules for certain Lie algebras of maximal class},
  J. Pure and Appl. Algebra, \textbf{99},(1995), 239-254.

\bibitem[BM]{BM} D. Burde, W. Moens.
 \emph{Minimal Faithful Representations of Reductive Lie Algebras},
 Archiv der Mathematik.,Vol. \textbf{89}, No. 6, (2007), 513-523.

\bibitem[CR]{CR} L. Cagliero, N. Rojas.
\emph{Faithful representation of minimal dimension of current Heisenberg Lie algebras},
Int. J. Math. Vol. \textbf{20} (11), (2009), 1347-1362.

\bibitem[FGH]{FGH} D. Fried, W. Goldman,  M. W. Hirsch.
\emph{Affine manifolds with nilpotent holonomy},
Comment. Math. Helv. \textbf{56} (1981), 487-523.

\bibitem[G]{Gr} W. A. De Graaf.
 \emph{Classification of 6-dimensional nilpotent Lie algebras over
       fields of characteristic not 2},
       J. of Alg. \textbf{309}, (2007), 640-653.

\bibitem[GN]{GN} W. de Graaf, W. Nickel.
\emph{Constructing faithful representations of finitely-generated
      torsion-free nilpotent groups},
 J. Symbolic Comput.,Vol. \textbf{33}, No. 1, (2002), 31-41.

\bibitem[J1]{J} N. Jacobson.
\emph{Lie Algebras},
Interscience Publishers, New York (1962).

\bibitem[J2]{J2} N. Jacobson.
 \emph{Schur's theorem on commutative matrices},
 Bull. Amer. Math. Soc.\textbf{50} (1944), 431-436.

\bibitem[K]{K} H. Kim.
 \emph{Complete left-invariant affine structures on nilpotent Lie groups},
 J. Dif. Geometry \textbf{24}(1986), 373-394.

\bibitem[KB]{KB} Y-F. Kang, C-M. Bai.
 \emph{Refinement of Ado's theorem in low dimensions and application in affine geomery},
 Communications in Algebra, \textbf{36}(1) (2008), 82-93.

\bibitem[M]{MM} M. Mirzakhani.
 \emph{A Simple Proof of a Theorem of Schur},
American Mathematical Monthly, Vol. \textbf{105}, No. 3,(1992), 260-262.

\bibitem[Mi]{Mi}  J. Milnor.
 \emph{On fundamental groups of complete affinely flat manifolds},
Adv. Math., \textbf{25} (1977), 178-187.

\bibitem[Mo]{Mo} V. V. Morozov.
 \emph{Classification of nilpotent Lie algebras of sixth order},
Izv. Vyss. Ucebn. Zaved. Mat. 1958 \textbf{4(5)} (1958), 161-171.

\bibitem[N]{N} W. Nickel.
 \emph{Matrix representations for torsion-free nilpotent groups by Deep Thought},
  J. of Algebra, \textbf{300},(2006), 376-383.

\bibitem[Ni]{Ni} O. A. Nielsen.
 \emph{Unitary Representations and Coadjoint Orbits of Low-Dimensional Nilpotent Lie Groups},
Queen's Papers in Pure and Appl. Math., \textbf{63}, Queen's University, Kingston, ON, 1983.

\bibitem[R]{R} N. Rojas.
 \emph{Minimal Faithful Representation of the Heisenberg Lie algebra with abelian factor},
 	arXiv:1206.5867v1 [math.RT]

\bibitem[S]{S} I. Schur.
 \emph{Zur Theorie vertauschbarer Matrizen},
 J. Reine Angew. Mathematik , \textbf{130} (1905), 66-76.

\bibitem[Se]{Se} D. Segal.
 \emph{Free left-symmetric algebras and an analogue of the Poincar\'e-Birkhoff-Witt Theorem},
    J. Algebra \textbf{164} (1994), 750-772.
\end{thebibliography}
\end{document}